\newcommand*\txte{\text{e}}
\newcommand*\proj[1]{\tfrac{#1#1^*}{#1^*#1}}
\newcommand*\projM[1]{\tfrac{#1#1^*M}{#1^*M#1}}
\title{Generalized Davidson and multidirectional-type methods for the
generalized singular value decomposition\thanksNwo}
\author{Ian N. Zwaan\thanks{%
  Department of Mathematics and Computer Science, TU Eindhoven, PO Box
  513, NL-5600 MB Eindhoven, The Netherlands,
  \{\texttt{ianzwaan.com}, \texttt{www.win.tue.nl/\allowbreak
  {\textasciitilde}hochsten}\}.}\, and Michiel E. Hochstenbach\sameThanks}
\date{May 12, 2017}
\begin{document}

\maketitle


\begin{abstract}
  We propose new iterative methods for computing nontrivial extremal
  generalized singular values and vectors. The first method is a
  generalized Davidson-type algorithm and the second method employs a
  multidirectional subspace expansion technique.  Essential to the
  latter is a fast truncation step designed to remove a low quality
  search direction and to ensure moderate growth of the search space.
  Both methods rely on thick restarts and may be combined with two
  different deflation approaches.  We argue that the methods have
  monotonic and (asymptotic) linear convergence, derive and discuss
  locally optimal expansion vectors, and explain why the fast truncation
  step ideally removes search directions orthogonal to the desired
  generalized singular vector.  Furthermore, we identify the relation
  between our generalized Davidson-type algorithm and the
  Jacobi--Davidson algorithm for the generalized singular value
  decomposition.  Finally, we generalize several known convergence
  results for the Hermitian eigenvalue problem to the Hermitian positive
  definite generalized eigenvalue problem.  Numerical experiments
  indicate that both methods are competitive.
\end{abstract}

\begin{keywords}
  Generalized singular value decomposition, GSVD, generalized singular
  value, generalized singular vector, generalized Davidson,
  multidirectional subspace expansion, subspace truncation, thick
  restart.
\end{keywords}

\begin{AMS}
  15A18,  
  15A23,  
  15A29,  
  65F15,  
  65F22,  
  65F30,  
  65F50.  
\end{AMS}



\section{Introduction}
\label{tcgsvd:sec:int}

The generalized singular value decomposition (GSVD) \cite{PS81gsvd} is a
generalization of the standard singular value decomposition (SVD), and
is used in, for example, linear discriminant analysis
\autocite{HJP03ldagsvd}, the method of particular solutions
\autocite{TM08mps}, general form Tikhonov regularization
\cite[Sec.~5.1]{PCH98RANK}, and more \cite{ZB92gsvd}.  Computing the
full GSVD with direct methods can be prohibitively time-consuming for
large problem sizes; however, for many applications is suffices to
compute only a few of the largest or smallest generalized singular
values and vectors. As a result, iterative methods may become attractive
when the matrices involved are large and sparse.

An early iterative approach based on a modified Lanczos method was
introduced by \textcite{HZ96gsvd}, and later a variation by
\textcite{KHE07gsvdtik}.  Both methods are inner-outer methods that
require the solution to a least squares problem in each iteration, which
may be computationally expensive. An approach that naturally allows for
inexact solutions is the Jacobi--Davidson-type method (JDGSVD)
introduced in \autocite{MH09jdgsvd}; however, this is still an
inner-outer method.  Alternatives to the previously mentioned methods
include iterative methods designed for (symmetric positive definite)
generalized eigenvalue problems, in particular generalized Davidson
\autocite{ET00MM,ET00Kny} and LOBPCG \autocite{AK01lobpcg}. These
methods compute only the right generalized singular vectors and require
additional steps to determine the left generalized singular vectors.
More importantly, applying these methods involves squaring potentially
ill-conditioned matrices.

In this paper we discuss two new and competitive iterative methods for
the computation of extremal generalized singular values and
corresponding generalized singular vectors. The first can be seen as a
generalized Davidson-type algorithm for the GSVD, while the second
method builds upon the first, but uses multidirectional subspace
expansion alongside a fast subspace truncation. The multidirectional
subspace expansion is intended to produce improved search directions,
whereas the subspace truncation is designed to remove low-quality search
directions that are ideally orthogonal to the desired generalized
singular vector.  Both methods can be used to compute either the
smallest or the largest generalized singular values of a matrix pair, or
to approximate the truncated GSVD (TGSVD).  A crucial part of both
methods is a thick restart that allows for the removal of unwanted
elements.

The remainder of this text is organized as follows.  We
derive a generalized Davidson-type algorithm for the GSVD in the next
section, and prove multiple related theoretical properties.
We subsequently discuss a $B^*B$-orthonormal version of the algorithm
and its connection to JDGSVD in Section~\ref{tcgsvd:sec:bborth}.
In Section~\ref{tcgsvd:sec:mdgsvd}, we examine locally optimal search
directions and argue for a multidirectional subspace expansion followed
by a fast subspace truncation; then we present our second algorithm. In
Section~\ref{tcgsvd:sec:tgsvd}, we explore the deflation of generalized
singular values and generalized singular vectors. We generalize several
known error bounds for the Hermitian eigenvalue problem to results for
the generalized singular value decomposition in
Section~\ref{tcgsvd:sec:err}. Finally, we consider numerical examples
and experiments in Section~\ref{tcgsvd:sec:num}, and end with
conclusions in Section~\ref{tcgsvd:sec:con}.



\section{Generalized Davidson for the GSVD}
\label{tcgsvd:sec:gdgsvd}

Triangular and diagonal are two closely related forms of the GSVD. The
triangular form is practical for the derivation and implementation of
our methods, while the diagonal form is particularly relevant for the
analysis. We adopt the definitions from \textcite{ZB92gsvd}, but with a
slightly more compact presentation.  Let $A$ be an $m\times n$ matrix,
$B$ a $p\times n$ matrix, and assume for the sake of simplicity that
$\Null(A) \cap \Null(B) = \{\vec 0\}$; then $\rank([A^T\, B^T]^T) = n$
and there exist unitary matrices $U$, $V$, $W$, an $m\times n$ matrix
$\Sigma_A$, a $p\times n$ matrix $\Sigma_B$, and a nonsingular
upper-triangular $n\times n$ matrix $R$ such that
\begin{equation}\label{tcgsvd:eq:gsvdTriu}
  AW = U \Sigma_A R
  \quad\text{and}\quad
  BW = V \Sigma_B R.
\end{equation}
The matrices $\Sigma_A$ and $\Sigma_B$ satisfy
\begin{equation*}
  \Sigma_A^T \Sigma_A = \diag(c_1^2, \dots, c_n^2),
  \qquad
  \Sigma_B^T \Sigma_B = \diag(s_1^2, \dots, s_n^2),
  \qquad \Sigma_A^T\Sigma_A + \Sigma_B^T\Sigma_B = I,
\end{equation*}
and can be partitioned as 
\begin{equation*}
  \bordermatrix[{[]}]{%
          & l & (n-p)_+ & (n-m)_+ \cr
  l       & D_A & 0 & 0 \cr
  (n-p)_+ & 0 & I & 0 \cr
  (m-n)_+ & 0 & 0 & 0
  },
  \qquad
  \bordermatrix[{[]}]{%
          & l & (n-p)_+ & (n-m)_+ \cr
  l       & D_B & 0 & 0 \cr
  (n-m)_+ & 0 & 0 & I \cr
  (p-n)_+ & 0 & 0 & 0
  },
\end{equation*}
where $l = \min\{m,p,n,m+p-n\}$, $(\cdot)_+ = \max\{\cdot,0\}$, and
$D_A$ and $D_B$ are diagonal matrices with nonnegative entries. The
generalized singular pairs $(c_j,s_j)$ are nonnegative and define the
regular generalized singular values $\sigma_j = \infty$ if $s_j=0$ and
$\sigma_j = c_j / s_j$ otherwise. Hence, we call a generalized singular
pair $(c_j,s_j)$ large if $\sigma_j$ is large and small if $\sigma_j$ is
small, and additionally refer to the largest and smallest $\sigma_j$ as
$\sigma_{\max}$ and $\sigma_{\min}$, respectively.  The diagonal
counterpart of \eqref{tcgsvd:eq:gsvdTriu} is
\begin{equation}\label{tcgsvd:eq:gsvdDiag}
  A X = U \Sigma_A
  \quad\text{and}\quad
  B X = V \Sigma_B
  \quad\text{with}\quad
  X = W R^{-1},
\end{equation}
and is useful because the columns of $X$ are the (right) singular
vectors $\vec x_j$ and satisfy, for instance,
\begin{equation}\label{tcgsvd:eq:gsvdgep}
  s_j^2 A^*\!A \vec x_j = c_j^2 B^*B \vec x_j.
\end{equation}
The assumption $\Null(A) \cap \Null(B) = \{\vec 0\}$ is not necessary
for the implementation of our algorithm; nevertheless, we will make this
assumption for the remainder of the text to simplify our
discussion and analysis.  We may also assume without loss of generality
that the desired generalized singular values are contained in the
leading principal submatrices of the factors.  Consequently, if $k < l$
and $C_k$, $S_k$, and $R_k$ denote the leading $k\times k$ principal
submatrices of $\Sigma_A$, $\Sigma_B$, and $R$; and $U_k$, $V_k$, $W_k$,
and $X_k$ denote the first $k$ columns of $U$, $V$, $W$, and $X$;
then $X_k = W_k R_k^{-1}$ and we can define the partial (or truncated)
GSVD of $(A,B)$ as
\begin{equation*}
    AW_k = U_k C_k R_k
    \quad\text{and}\quad
    BW_k = V_k S_k R_k.
\end{equation*}
We aim is to approximate this partial GSVD for a $k \ll n$.

Since \eqref{tcgsvd:eq:gsvdgep} can be interpreted as a generalized
eigenvalue problem, it appears reasonable to consider the search space
\begin{multline*}
  \subspace W_k
  = \linspan \{ \vec{\widetilde x}_{(0)},
  (\widetilde s_{(0)}^2 A^*\!A - \widetilde c_{(0)}^2 B^*B) \vec{\widetilde x}_{(0)}, \\
  (\widetilde s_{(1)}^2 A^*\!A - \widetilde c_{(1)}^2 B^*B) \vec{\widetilde x}_{(1)}, \dots,
  (\widetilde s_{(k-1)}^2 A^*\!A - \widetilde c_{(k-1)}^2 B^*B)
  \vec{\widetilde x}_{(k-1)} \},
\end{multline*}
consisting of homogeneous residuals generated by the generalized
Davidson method (c.f., e.g., \cite[Sec.~11.2.4]{ET00MM} and
\cite[Sec.~11.3.6]{ET00Kny}) applied to the matrix pencil
$(A^*\!A,B^*B)$.  The quantities $\vec{\widetilde x}_{(j)}$, $\widetilde
c_{(j)}$, and $\widetilde s_{(j)}$ are approximations to $\vec x_1$,
$c_1$, and $s_1$ with respect to the search space $\subspace W_j$.  The
challenge is to compute a basis $W_k$ with orthonormal columns for
$\subspace W_k$ without using the products $A^*\!A$ and $B^*B$; however,
let us focus on the extraction phase first. We will later see that a
natural subspace expansion follows as a consequence.

Given $W_k$, we can compute the reduced QR decompositions
\begin{equation}\label{tcgsvd:eq:AWBW}
  AW_k = U_k H_k,
  \qquad
  BW_k = V_k K_k,
\end{equation}
where $U_k$ and $V_k$ have $k$ orthonormal columns and $H_k$
and $K_k$ are $k\times k$ and upper-triangular.
To compute the approximate generalized singular values,
let the triangular form GSVD of $(H_k,K_k)$ be given by
\begin{equation*}
  H_k \widetilde W = \widetilde U \widetilde C \widetilde R,
  \qquad
  K_k \widetilde W = \widetilde V \widetilde S \widetilde R,
\end{equation*}
where $\widetilde U$, $\widetilde V$, and $\widetilde W$ are
orthonormal, $C$ and $S$ are diagonal, and $R$ is upper triangular.  At
this point, we can readily form the approximate partial GSVD
\begin{equation}\label{tcgsvd:eq:pgsvd}
  A(W_k \widetilde W) = (U_k \widetilde U) \widetilde C \widetilde R,
  \qquad
  B(W_k \widetilde W) = (V_k \widetilde V) \widetilde S \widetilde R,
\end{equation}
and determine the leading approximate generalized singular values and
vectors. When the dimension of the search space $\subspace W_k$
grows large, a thick restart can be performed by partitioning the
decompositions in \eqref{tcgsvd:eq:pgsvd} as
\begin{equation}\label{tcgsvd:eq:part}
  \begin{split}
    A \begin{bmatrix} W_k \widetilde W_1 & W_k \widetilde W_2 \end{bmatrix}
    &= \begin{bmatrix} U_k \widetilde U_1 & U_k \widetilde U_2 \end{bmatrix}
      \begin{bmatrix} \widetilde C_{1} \\ & \widetilde C_{2} \end{bmatrix}
      \begin{bmatrix} \widetilde R_{11} & \widetilde R_{12} \\ &
      \widetilde R_{22} \end{bmatrix}, \\
    B \begin{bmatrix} W_k \widetilde W_1 & W_k \widetilde W_2 \end{bmatrix}
    &= \begin{bmatrix} V_k \widetilde V_1 & V_k \widetilde V_2 \end{bmatrix}
      \begin{bmatrix} \widetilde S_{1} \\ & \widetilde S_{2} \end{bmatrix}
      \begin{bmatrix} \widetilde R_{11} & \widetilde R_{12} \\ &
      \widetilde R_{22} \end{bmatrix},
  \end{split}
\end{equation}
and truncating to
\begin{equation*}
  A(W_k \widetilde W_1)
  = (U_k \widetilde U_1) \widetilde C_{1} \widetilde R_{11},
  \qquad
  B(W_k \widetilde W_1)
  = (V_k \widetilde V_1) \widetilde S_{1} \widetilde R_{11}.
\end{equation*}
If there is need to reorder the $c_j$ and $s_j$, then we can simply use
the appropriate permutation matrix $P$ and compute
\begin{equation*}
  \begin{split}
    A (W_k \widetilde W Q) &= (U_k \widetilde U P) (P^* C P) (P^* R Q), \\
    B (W_k \widetilde W Q) &= (V_k \widetilde V P) (P^* S P) (P^* R Q),
  \end{split}
\end{equation*}
where $Q$ is unitary and such that $P^*RQ$ is upper triangular.

For a subsequent generalized Davidson-type expansion of the search
space, let
\begin{equation*}
  \vec{\widetilde u}_1 = U_k \widetilde U_1 \vec e_1,
  \qquad
  \vec{\widetilde v}_1 = V_k \widetilde V_1 \vec e_1,
  \qquad
  \vec{\widetilde w}_1 = W_k \widetilde W_1 \vec e_1,
  \quad\text{and}\quad
  \vec{\widetilde x}_1 = \vec{\widetilde w}_1 / \widetilde r_{11}
\end{equation*}
be the approximate generalized singular vectors satisfying
\begin{equation*}
  A\vec{\widetilde x}_1 = \widetilde c_1 \vec{\widetilde u}_1
  \quad\text{and}\quad
  B\vec{\widetilde x}_1 = \widetilde s_1 \vec{\widetilde v}_1.
\end{equation*}
Then the homogeneous residual given by
\begin{equation}\label{tcgsvd:eq:gdres}
  \vec r
  = (\widetilde s_1^2 A^*\!A - \widetilde c_1^2 B^*B) \vec{\widetilde x}_1
  = \widetilde c_1 \widetilde s_1 (\widetilde s_1 A^*
    \vec{\widetilde u}_1 - \widetilde c_1 B^*\vec{\widetilde v}_1)
\end{equation}
suggests the expansion vector $\vec{\widetilde r} = \widetilde s_1
A^*\vec{\widetilde u}_1 - \widetilde c_1 B^*\vec{\widetilde v}_1$,
which is orthogonal to $W_k$.
The residual norm $\|\vec r\|$ goes to zero as the generalized singular
value and vector approximations converge, and we recommend terminating
the iterations when the right-hand side of
\begin{equation}\label{tcgsvd:eq:cnvtol}
  \frac{\|\vec r\|}{(\widetilde s_1^2 \|A^*\!A\| + \widetilde c_1^2
  \|B^*B\|)\|\vec{\widetilde x}_1\|}
  \le \frac{\sqrt{n}\,|\widetilde r_{11}|\, \|\vec r\|}{\widetilde s_1^2 \|A^*\!A\|_1 +
  \widetilde c_1^2 \|B^*B\|_1}
\end{equation}
is sufficiently small.  The left-hand side is  the normwise backward
error by \textcite{FT00bwe}, and the right-hand side is an alternative
that can be approximated efficiently; for example, using the
\texttt{normest1} function in MATLAB, which  does not require computing
the matrix products $A^*\!A$ and $B^*B$ explicitly.  The GDGSVD
algorithm is summarized in Algorithm~\ref{tcgsvd:alg:gdgsvd}.


\begin{algorithm}[Generalized Davidson for the GSVD
  (GDGSVD)]\label{tcgsvd:alg:gdgsvd}
\strut\\*
  \textbf{Input:} Matrix pair $(A,B)$, starting vector $\vec w_0$,
    minimum and maximum dimensions $j < \ell$. \\
  \textbf{Output:} $AW_j = U_j C_j R_j$ and $BW_j = V_j S_j R_j$
    approximating a partial GSVD. \\
  \tab[\phantom01.] Let $\vec{\widetilde r} = \vec w_0$. \\
  \tab[\phantom02.] \textbf{for} \textit{number of restarts}
    \textbf{and} \textit{not converged (cf., e.g.,~\eqref{tcgsvd:eq:cnvtol})} \textbf{do} \\
  \tab[\phantom03.]\tab  \textbf{for} $k = 1$, 2, \dots, $\ell$
    \textbf{do} \\
  \tab[\phantom04.]\tab\tab  $\vec w_k = \vec{\widetilde r} / \|\vec{\widetilde r}\|$. \\
  \tab[\phantom05.]\tab\tab  Update $AW_k = U_k H_k$ and $BW_k = V_k K_k$.\\
  \tab[\phantom06.]\tab\tab  Compute $H_k = \widetilde U \widetilde C
    \widetilde R \widetilde W^*$ and $K_k = \widetilde V \widetilde S
    \widetilde R \widetilde W^*$. \\
  \tab[\phantom07.]\tab\tab  Let $\vec{\widetilde r} = s_{1}
    A^*\vec{\widetilde u}_1 - c_{1} B^*\vec{\widetilde v}_1$.\\
  \tab[\phantom08.]\tab\tab  \textbf{if} $j \le k$ \textbf{and}
    \textit{converged (cf., e.g.,~\eqref{tcgsvd:eq:cnvtol})} 
    \textbf{then} \textbf{break} \\
  \tab[\phantom09.]\tab  \textbf{end} \\
  \tab[10.]\tab  Partition $\widetilde U$, $\widetilde V$, $\widetilde
  W$, $\widetilde C$, $\widetilde S$, and $\widetilde R$ according to
  \eqref{tcgsvd:eq:part}. \\
  \tab[11.]\tab  Let $U_j = U_k \widetilde U_1$, $V_j = V_k \widetilde
    V_1$, and $W_j = W_k \widetilde W_1$. \\
  \tab[12.]\tab Let $H_j = \widetilde C_{1} \widetilde R_{11}$ and $K_j
    = \widetilde S_{1} \widetilde R_{11}$. \\
  \tab[13.]  \textbf{end}
\end{algorithm}


By design, the largest (or smallest) Ritz values are preserved after
the restart; moreover, the generalized singular values increase (or
decrease) monotonically per iteration as indicated by the proposition
below.  We wish to emphasize that the proof of the proposition does not
require $B^*B$ to be nonsingular, as opposed to the Courant--Fischer
minimax principles for the generalized eigenvalue problem.



\begin{proposition}\label{tcgsvd:thm:mono}
  Let $\subspace W_k$ and $\subspace W_{k+1}$ be subspaces of
  dimensions $k$ and $k+1$, respectively, and such that $\subspace
  W_k \subset \subspace W_{k+1}$.  If $\sigma_{\max}(\subspace W)$
  and $\sigma_{\min}(\subspace W)$ denote the maximum and minimum
  generalized singular values of $A$ and $B$ with respect to the
  subspace $\subspace W$, then
  \begin{equation*}
    \sigma_{\max}
    \ge \sigma_{\max}(\subspace W_{k+1})
    \ge \sigma_{\max}(\subspace W_k)
    \ge \sigma_{\min}(\subspace W_k)
    \ge \sigma_{\min}(\subspace W_{k+1})
    \ge \sigma_{\min}.
  \end{equation*}
\end{proposition}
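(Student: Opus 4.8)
The plan is to reduce the generalized singular values with respect to a subspace to ordinary Rayleigh quotients of a single Hermitian positive definite pencil, and then invoke the elementary monotonicity of extremal Rayleigh quotients under subspace inclusion. The key observation is that the normalization $c_j^2 + s_j^2 = 1$ lets me trade the possibly infinite quantity $\sigma = c/s$ for the bounded quantity $c^2 = \sigma^2/(1+\sigma^2) \in [0,1]$, which is a strictly increasing function of $\sigma$. This substitution is precisely what makes the argument go through without any nonsingularity assumption on $B^*B$.

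First I would set $G = A^*A + B^*B$ and note that the assumption $\Null(A) \cap \Null(B) = \{\vec 0\}$ makes $G$ Hermitian positive definite, so that $G^{1/2}$ is well defined and invertible. For any nonzero $\vec w$ I would introduce the quantity $\gamma(\vec w) = \|A\vec w\|^2 / (\|A\vec w\|^2 + \|B\vec w\|^2) = (\vec w^* A^*\!A \vec w)/(\vec w^* G \vec w)$, which is finite and lies in $[0,1]$ regardless of whether $B\vec w = \vec 0$. Writing $\vec y = G^{1/2}\vec w$ turns $\gamma(\vec w)$ into the ordinary Rayleigh quotient $\vec y^* \hat M \vec y / \vec y^* \vec y$ of the Hermitian matrix $\hat M = G^{-1/2} A^*\!A\, G^{-1/2}$, whose eigenvalues are exactly the $c_j^2$ and whose global extremes $\lambda_{\min}(\hat M) = c_{\min}^2$ and $\lambda_{\max}(\hat M) = c_{\max}^2$ correspond to $\sigma_{\min}$ and $\sigma_{\max}$.

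Next I would establish the variational characterization that the extremal generalized singular values with respect to $\subspace W$ correspond to the extremal values of $\gamma$ over $\subspace W$. Concretely, the extraction in \eqref{tcgsvd:eq:AWBW}--\eqref{tcgsvd:eq:pgsvd} produces generalized singular values of the pair $(H_k,K_k)$ whose squared cosines $\tilde c_i^2$ are the eigenvalues of the definite pencil $(W_k^* A^*\!A\, W_k,\ W_k^* G W_k)$, since $H_k^* H_k = W_k^* A^*\!A\, W_k$ and $H_k^* H_k + K_k^* K_k = W_k^* G W_k$; by the Courant--Fischer principle for this pencil, equivalently for $\hat M$ restricted to $G^{1/2}\subspace W$, the largest and smallest of these equal $\max_{\vec w \in \subspace W} \gamma(\vec w)$ and $\min_{\vec w \in \subspace W}\gamma(\vec w)$. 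Because $G^{1/2}$ is a bijection, $\subspace W_k \subset \subspace W_{k+1}$ implies $G^{1/2}\subspace W_k \subset G^{1/2}\subspace W_{k+1}$, so enlarging the subspace can only increase the maximum Rayleigh quotient and decrease the minimum, and both remain trapped between $\lambda_{\min}(\hat M)$ and $\lambda_{\max}(\hat M)$.

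Finally I would translate the resulting chain of inequalities in $\gamma$ back to $\sigma$ through the increasing bijection $\gamma \mapsto \sqrt{\gamma/(1-\gamma)}$ from $[0,1]$ onto $[0,\infty]$, which preserves every inequality and yields the stated result; the trivial bound $\sigma_{\max}(\subspace W_k) \ge \sigma_{\min}(\subspace W_k)$ is just the fact that a maximum dominates a minimum over the same set. The main obstacle, and the only place demanding care, is the variational characterization in the third step: one must verify that the generalized singular values delivered by the QR-plus-GSVD extraction really are the eigenvalues of $(W_k^* A^*\!A\, W_k,\ W_k^* G W_k)$ and hence admit the Rayleigh-quotient description, including the limiting case $\|B\vec w\| = 0$ that yields an infinite $\sigma$ but a perfectly finite $\gamma = 1$. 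Once this identification is in place, the monotonicity is an immediate consequence of the behavior of Rayleigh quotients under subspace inclusion.
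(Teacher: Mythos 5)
Your proposal is correct and follows essentially the same route as the paper's proof: both reduce the problem to the Hermitian positive definite pencil $(A^*\!A,\, A^*\!A + B^*B)$, whose generalized eigenvalues are the $c_i^2$, exploit that $\sigma^2 = c^2/(1-c^2)$ is increasing so the inequalities transfer, and conclude by Courant--Fischer monotonicity of extremal Rayleigh quotients under subspace inclusion. Your explicit $G^{1/2}$ change of variables and the identification of the projected pencil $(H_k^*H_k,\, H_k^*H_k + K_k^*K_k) = (W_k^*A^*\!AW_k,\, W_k^*GW_k)$ simply spell out steps the paper leaves implicit.
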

\begin{proof}
  Both $A^*\!A$ and $B^*B$ may be singular; therefore, we consider the
  pencil
  \begin{equation*}
    (A^*\!A, A^*\!A + B^*B) = (A^*\!A, X^{-*}X^{-1})
  \end{equation*}
  with generalized eigenvalues $c_i^2$ and note that $\sigma_i^2 = c_i^2
  / (1 - c_i^2)$ with the convention that $1/0 = \infty$. Applying the
  Courant--Fischer minimax principles yields
  \begin{equation*}
    c_1
    \ge \max_{\vec 0 \neq \vec w \in \subspace W_{k+1}}
      \frac{\|A\vec w\|}{\|X^{-1}\vec w\|}
    \ge \max_{\vec 0 \neq \vec w \in \subspace W_{k}}
      \frac{\|A\vec w\|}{\|X^{-1}\vec w\|}
    \ge \min_{\vec 0 \neq \vec w \in \subspace W_{k}}
      \frac{\|A\vec w\|}{\|X^{-1}\vec w\|}
    \ge \min_{\vec 0 \neq \vec w \in \subspace W_{k+1}}
      \frac{\|A\vec w\|}{\|X^{-1}\vec w\|}
    \ge c_n.
  \end{equation*}
\end{proof}


Proposition~\ref{tcgsvd:thm:mono} implies that if a basis $W_k$ for a
subspace $\subspace W_k$ is computed by
Algorithm~\ref{tcgsvd:alg:gdgsvd}, then
\begin{equation*}
  \sigma_{\max}(\subspace W_k)
  = \max_{\vec 0 \neq \vec w \in \subspace W_{k}}
    \frac{\|A\vec w\|}{\|X^{-1}\vec w\|}
  = \max_{\vec c \neq \vec 0}
    \frac{\|AW_k \vec c\|}{\|[A^T\;B^T]^T W_k \vec c\|}
  = \max_{\vec c \neq \vec 0}
    \frac{\|H_k \vec c\|}{\|[H_k^T\;K_k^T]^T \vec c\|};
\end{equation*}
that is, the largest generalized singular value of the matrix pair
$(A,B)$ with respect to the subspace $\subspace W_k$ is the largest
generalized singular value of $(H_k,K_k)$. A similar statement
holds for the smallest generalized singular value.  Furthermore, the
matrix pair $(H_k,K_k)$ is optimal in the sense of the following
proposition.


\begin{proposition}\label{tcgsvd:thm:optHK}
  Let the $M$-Frobenius norm for a Hermitian positive definite matrix
  $M$ be defined as $\|Y\|_{F,M}^2 = \operatorname{trace}(Y^*MY)$. Now
  consider the decompositions from \eqref{tcgsvd:eq:AWBW} and define the
  residuals
  \begin{equation*}\label{tcgsvd:eq:4res}
    \begin{split}
      R_1(G) &= AW_k - U_k G, \\
      R_2(G) &= BW_k - V_k G, \\
      R_3(G) &= A^*U_k - B^*V_k G^*, \\
      R_4(G) &= B^*V_k - A^*U_k G^*;
    \end{split}
  \end{equation*}
  then the following results hold.
  \begin{enumerate}
    \item $G = H_k = U_k^* A W_k$ minimizes $\|R_1(G)\|_2$ and
      is the unique minimizer of $\|R_1(H_k)\|_F$.
    \item $G = K_k = V_k^* B W_k$ minimizes $\|R_2(G)\|_2$ and
      is the unique minimizer of $\|R_2(K_k)\|_F$.
    \item If $B^*B$ is nonsingular, then $G = H_k K_k^{-1}$
      minimizes $\|R_3(G)\|_{(B^*B)^{-1}}$ and is the unique minimizer
      of $R_3$ with respect to the $(B^*B)^{-1}$-Frobenius norm.
    \item If $A^*\!A$ is nonsingular, then $G = K_k H_k^{-1}$
      minimizes $\|R_4(G)\|_{(A^*\!A)^{-1}}$ and is the unique minimizer
      of $R_4$ with respect to the $(A^*\!A)^{-1}$-Frobenius norm.
  \end{enumerate}
\end{proposition}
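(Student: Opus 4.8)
The plan is to handle the four parts in two groups: parts 1--2 are exact projections where the residual can be driven to zero, while parts 3--4 contain the genuine minimization and hinge on a single projector identity. For part 1, I would first note that \eqref{tcgsvd:eq:AWBW} is an exact factorization, so $AW_k = U_k H_k$ and therefore $R_1(G) = U_k(H_k - G)$ for every $G$. Since $U_k$ has orthonormal columns it is an isometry, whence $\|R_1(G)\|_2 = \|H_k - G\|_2$ and $\|R_1(G)\|_F = \|H_k - G\|_F$; both right-hand sides vanish exactly when $G = H_k$, which simultaneously yields the minimizer and its uniqueness, and $H_k = U_k^* A W_k$ follows by left-multiplying $AW_k = U_k H_k$ by $U_k^*$. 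Part 2 is verbatim with $(A, U_k, H_k)$ replaced by $(B, V_k, K_k)$.

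For part 3 I would set $M = (B^*B)^{-1}$ and expand
\[
  \|R_3(G)\|_{F,M}^2 = \operatorname{trace}\bigl((A^*U_k - B^*V_k G^*)^* M (A^*U_k - B^*V_k G^*)\bigr).
\]
The crucial observation is that $B(B^*B)^{-1}B^*$ is the orthogonal projector onto $\operatorname{range}(B)$, and that the columns of $V_k = BW_k K_k^{-1}$ lie in $\operatorname{range}(B)$ (here $K_k$ is invertible because $B^*B$ is nonsingular); consequently $V_k^* B\, M\, B^* V_k = V_k^* V_k = I$, so the quadratic-in-$G$ term of the expansion collapses to $\operatorname{trace}(GG^*) = \|G\|_F^2$. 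Writing $N := U_k^* A\, M\, B^* V_k$, the two cross terms combine into $-\operatorname{trace}(N G^*) - \operatorname{trace}(G N^*)$, and the simplification $B^* V_k = B^*B\, W_k K_k^{-1}$ together with $U_k^* A W_k = H_k$ gives
\[
  N = U_k^* A (B^*B)^{-1} B^*B\, W_k K_k^{-1} = H_k K_k^{-1}.
\]

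It then suffices to complete the square: the objective equals $\|G - N\|_F^2$ up to an additive constant independent of $G$, so it is strictly convex with the unique minimizer $G = N = H_k K_k^{-1}$, as claimed. Part 4 follows by the symmetric argument with $M = (A^*A)^{-1}$ and the roles of $(A, U_k, H_k)$ and $(B, V_k, K_k)$ exchanged; nonsingularity of $A^*A$ makes $H_k$ invertible and places the columns of $U_k$ in $\operatorname{range}(A)$, and the minimizer becomes $K_k H_k^{-1}$. The main obstacle---indeed the only nonroutine step---is recognizing the projector identity $V_k^* B (B^*B)^{-1} B^* V_k = I$, since it is exactly what reduces the quadratic term to $\|G\|_F^2$ and permits the clean completion of the square; everything else is routine trace algebra.
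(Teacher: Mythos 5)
Your proof is self-contained, whereas the paper's proof consists of the single observation $A^*U_k = A^*\!AW_k H_k^{-1}$, $B^*V_k = B^*BW_k K_k^{-1}$ followed by a deferral to \autocite[Thm~2.1]{MH09jdgsvd}; so you are genuinely reconstructing the argument rather than adapting the cited result. Parts 1--2 of your proof are correct (and, as you note, degenerate here because \eqref{tcgsvd:eq:AWBW} is an exact factorization, so both residuals vanish at the claimed minimizers), and for parts 3--4 your identity $V_k^*B(B^*B)^{-1}B^*V_k = I$ and the computation $N = H_kK_k^{-1}$ are exactly the right ingredients.

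There is, however, a gap: items 3 and 4 each make \emph{two} claims, in parallel with item 1, namely that $H_kK_k^{-1}$ minimizes the weighted operator norm $\|R_3(G)\|_{(B^*B)^{-1}}$ (the analogue of $\|R_1(G)\|_2$) \emph{and} that it is the unique minimizer in the $(B^*B)^{-1}$-Frobenius norm. Your trace expansion with completion of the square is intrinsically a Frobenius-norm argument; it establishes the second claim but says nothing about the first, since the operator $2$-norm does not split additively over orthogonal components. The repair is cheap and uses precisely your projector identity: $Q := (B^*B)^{-1/2}B^*V_k$ has orthonormal columns, so
\begin{equation*}
  \|R_3(G)\|_{(B^*B)^{-1}}
  = \big\|(B^*B)^{-1/2}A^*U_k - QG^*\big\|_2
  \ge \big\|(I-QQ^*)(B^*B)^{-1/2}A^*U_k\big\|_2,
\end{equation*}
because $(I-QQ^*)Q = 0$ and $I - QQ^*$ is an orthogonal projector, hence a contraction; equality holds at $G^* = Q^*(B^*B)^{-1/2}A^*U_k = K_k^{-*}H_k^*$, that is, at $G = H_kK_k^{-1}$. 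The same orthogonal splitting also recovers your Frobenius statement together with its uniqueness, and part 4 is symmetric. With this addition your proof is complete and, modulo the missing spectral-norm half, matches what the cited theorem supplies for the paper.
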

\begin{proof}
  With the observation that $A^*U_k = A^*\!AW_k H_k^{-1}$ and
  $B^*V_k = B^*BW_k K_k^{-1}$, the proof becomes a
  straightforward adaptation of \autocite[Thm~2.1]{MH09jdgsvd}.
\end{proof}


Propositions~\ref{tcgsvd:thm:mono} and \ref{tcgsvd:thm:optHK}
demonstrate that the convergence behavior of
Algorithm~\ref{tcgsvd:alg:gdgsvd} is monotonic, and that the computed
$H_k$ and $K_k$ are in some sense optimal for the search space
$\subspace W_k = \linspan(W_k)$; however, the propositions make no
statement regarding the quality of the subspace expansion. A locally
optimal residual-type subspace expansion can be derived with inspiration
from \textcite{QY08opex}.


\begin{proposition}\label{tcgsvd:thm:optRx}
  Define
  \begin{equation*}
    R_k = A^*\!AW_k (H_k^*H_k + K_k^*K_k)^{-1} K_k^*K_k
      {} - B^*BW_k (H_k^*H_k + K_k^*K_k)^{-1}H_k^*H_k
  \end{equation*}
  and let $\vec r = R_k\vec c$; then
  \begin{equation*}
    \cos^2 (\vec x_1, [W_k\; \vec r])
    = \cos^2 (\vec x_1, W_k) + \cos^2 (\vec x_1, \vec r)
  \end{equation*}
  is maximized for $\vec{c} = R_k^+ \vec x_1$.
\end{proposition}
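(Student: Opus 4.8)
The plan is to prove the displayed identity in two stages. First I would show that every vector $\vec r = R_k\vec c$ is orthogonal to the search space $\subspace W_k$, which converts the angle relation for $[W_k\;\vec r]$ into a Pythagorean sum of squared cosines; then I would maximize the resulting expression over $\vec c$, which reduces to a standard best-approximation problem whose solution is given by the pseudoinverse.

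The heart of the argument is the orthogonality $W_k^*R_k = 0$. From \eqref{tcgsvd:eq:AWBW} and the orthonormality of $U_k$ and $V_k$ we have $W_k^*A^*\!AW_k = H_k^*H_k$ and $W_k^*B^*BW_k = K_k^*K_k$. Writing $P = H_k^*H_k$, $Q = K_k^*K_k$, and $M = P + Q$, premultiplying the definition of $R_k$ by $W_k^*$ gives $W_k^*R_k = PM^{-1}Q - QM^{-1}P$. Substituting $Q = M - P$ shows that both terms equal $P - PM^{-1}P$, so the difference vanishes. Here $M = W_k^*(A^*\!A + B^*B)W_k = W_k^*X^{-*}X^{-1}W_k$ is positive definite, hence invertible, because $X$ is nonsingular and $W_k$ has full column rank. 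Consequently $\vec r = R_k\vec c \perp \subspace W_k$ for every $\vec c$, which is exactly what the Pythagorean structure requires.

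Since $\vec r \perp \subspace W_k$ and $W_k$ has orthonormal columns, the orthogonal projector onto $[W_k\;\vec r]$ splits as $W_kW_k^* + \vec r\vec r^*/(\vec r^*\vec r)$, a sum of two mutually orthogonal projectors (their product is zero because $W_k^*\vec r = 0$). Applying this to $\vec x_1$, taking squared norms, and dividing by $\|\vec x_1\|^2$ yields precisely $\cos^2(\vec x_1,[W_k\;\vec r]) = \cos^2(\vec x_1,W_k) + \cos^2(\vec x_1,\vec r)$, mirroring the construction of \textcite{QY08opex}.

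It then remains to maximize over $\vec c$. The term $\cos^2(\vec x_1,W_k)$ is independent of $\vec c$, so I would maximize $\cos^2(\vec x_1,\vec r) = |\vec x_1^*R_k\vec c|^2/(\|\vec x_1\|^2\|R_k\vec c\|^2)$. Setting $\vec y = R_k\vec c$, this equals $|\vec x_1^*\vec y|^2/(\|\vec x_1\|^2\|\vec y\|^2)$ with $\vec y$ ranging over the column space of $R_k$; by Cauchy--Schwarz restricted to that subspace it is maximal exactly when $\vec y$ is parallel to the orthogonal projection of $\vec x_1$ onto the column space of $R_k$. Since $R_kR_k^+$ is that orthogonal projector, the choice $\vec c = R_k^+\vec x_1$ gives $\vec r = R_kR_k^+\vec x_1$ and attains the maximum. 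I expect the only genuinely delicate point to be the cancellation $W_k^*R_k = 0$: once the substitution $Q = M - P$ is spotted the orthogonality is immediate, and the Pythagorean decomposition and pseudoinverse maximization are then routine. The degenerate case in which $\vec x_1$ is orthogonal to the column space of $R_k$ is trivial, since every admissible $\vec r$ then satisfies $\cos(\vec x_1,\vec r) = 0$.
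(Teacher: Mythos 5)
Your proposal is correct and follows the same path as the paper's proof: establish the key orthogonality $W_k^*R_k = 0$, deduce the Pythagorean splitting of $\cos^2(\vec x_1, [W_k\; \vec r])$, and maximize the remaining term via the pseudoinverse, $\vec c = R_k^+\vec x_1$. The only difference is in how the orthogonality is verified --- the paper inserts the GSVD of $(H_k, K_k)$ so that the diagonal factors commute, whereas you substitute $K_k^*K_k = M - H_k^*H_k$ to see both terms equal $H_k^*H_k - H_k^*H_k M^{-1} H_k^*H_k$ --- a slightly more elementary algebraic route to the same identity, and you also spell out the Cauchy--Schwarz step that the paper leaves implicit.
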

\begin{proof}
  Since $\Null(A) \cap \Null(B) = \{ \vec 0 \}$ we also have 
  $\Null(H_k) \cap \Null(K_k) = \{ \vec 0 \}$, which implies that
  $H_k^*H_k + K_k^*K_k$ is invertible and $R_k$ is
  well-defined. Furthermore, it is now straightforward to verify that
  \begin{equation*}
    W_k^*R_k
    = H_k^*H_k (H_k^*H_k + K_k^*K_k)^{-1} K_k^*K_k
    {} - K_k^*K_k (H_k^*H_k + K_k^*K_k)^{-1} H_k^*H_k = 0
  \end{equation*}
  using the GSVD of $H_k$ and $K_k$. It follows that
  \begin{equation*}
    \|[W_k\; \vec r]^*\vec x_1\|^2
    = \|W_k^*\vec x_1\|^2 + |\vec r^*\vec x_1|^2,
  \end{equation*}
  which realizes its maximum for $\vec c = R_k^+ \vec x_1$.
\end{proof}



Different choices for $R_k$ in Proposition~\ref{tcgsvd:thm:optRx} are
possible; however, the current choice does not require additional
assumptions on, for instance, $H_k$ and $K_k$. Regardless of the
choice of $R_k$, computing the optimal expansion vector is generally
impossible without a priori knowledge of the desired generalized
singular vector $\vec x_1$.  Therefore, we expand the search space with
a residual-type vector similar to generalized Davidson. The convergence
of generalized Davidson is closely connected to steepest descent and has
been studied extensively; see, for example, \textcite{EO03gd1,EO03gd2}
and references therein. For completeness, we add the following
asymptotic bound for the GSVD.



\begin{proposition}\label{tcgsvd:thm:asymconv}
  Let $(c_1,s_1)$ be the smallest generalized singular pair of $(A,B)$
  with corresponding generalized singular vector $\vec x_1$, and assume
  the pair is simple.  Define the Hermitian positive definite operator
  $M = s_1^2 A^*\!A - c_1^2 B^*B$ restricted to the domain perpendicular
  to $(A^*\!A + B^*B)\vec x_1 = X^{-*} \vec e_1$, and let the
  eigenvalues of $M$ be given by
  \begin{equation*}
    \lambda_1 \ge \lambda_2 \ge \dots \ge \lambda_{n-1} > 0.
  \end{equation*}
  Furthermore, let $\vec{\widetilde x}_1$, $\widetilde c_1 =
  \|A\vec{\widetilde x}_1\|$, and $\widetilde s_1 = \|B\vec{\widetilde
  x}_1\|$ approximate $\vec x_1$, $c_1$, and $s_1$, respectively, and be
  such that $\widetilde c_1^2 + \widetilde s_1^2 = 1$.  If
  $\vec{\widetilde x}_1 = \xi \vec x_1 + \vec f$ for some scalar $\xi$
  and vector $\vec f \perp X^{-*}\vec e_1$; then
  \begin{equation*}
    \sin^2([\vec{\widetilde x}_1\; \vec r], \vec x_1)
    \le \left(\frac{\kappa-1}{\kappa+1}\right)^2
    \sin^2(\vec{\widetilde x}_1, \vec x_1)
    {} + \Landau O(\|\vec f\|^3),
  \end{equation*}
  where $\kappa = \lambda_{1} / \lambda_{n-1}$ is the condition number
  of $M$, and $\vec r = (\widetilde s_1^2 A^*\!A - \widetilde c_1^2
  B^*B) \vec{\widetilde x}_1$
  is the homogeneous residual.
\end{proposition}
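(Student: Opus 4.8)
The plan is to recognize the homogeneous residual $\vec r$ as a steepest-descent direction and to mirror the classical convergence analysis of steepest descent for the Hermitian eigenproblem, of which this proposition is the GSVD analogue. Writing $N = A^*\!A + B^*B = X^{-*}X^{-1}$ and $\rho(\vec w) = \|A\vec w\|^2 / (\|A\vec w\|^2 + \|B\vec w\|^2)$, one checks that $\rho(\vec{\widetilde x}_1) = \widetilde c_1^2$ and that $\nabla\rho(\vec{\widetilde x}_1)$ is a positive multiple of $\vec r = (\widetilde s_1^2 A^*\!A - \widetilde c_1^2 B^*B)\vec{\widetilde x}_1$. Hence expanding $[\vec{\widetilde x}_1\;\vec r]$ and then selecting the best approximation in this plane is precisely one locally optimal steepest-descent step for minimizing $\rho$ toward its minimizer $\vec x_1$, and $\sin^2([\vec{\widetilde x}_1\;\vec r],\vec x_1) = \min_{\tau}\sin^2(\vec{\widetilde x}_1 - \tau\vec r,\, \vec x_1)$.

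First I would expand the residual to leading order in $\vec f$. Using $\widetilde c_1^2 + \widetilde s_1^2 = 1$ gives $\widetilde s_1^2 A^*\!A - \widetilde c_1^2 B^*B = A^*\!A - \widetilde c_1^2 N$, and likewise $M = A^*\!A - c_1^2 N$. From $s_1^2 A^*\!A\vec x_1 = c_1^2 B^*B\vec x_1$ one obtains $A^*\!A\vec x_1 = c_1^2 N\vec x_1$ and $M\vec x_1 = \vec 0$, while the normalization together with $\vec f \perp N\vec x_1$ yields $\widetilde c_1^2 = c_1^2 + \vec f^*M\vec f$. Substituting $\vec{\widetilde x}_1 = \xi\vec x_1 + \vec f$ then produces the exact identity $\vec r = M\vec f - (\vec f^*M\vec f)\,N\vec{\widetilde x}_1$, so that $\vec r = M\vec f + \Landau O(\|\vec f\|^2)$ with the correction directed along $N\vec{\widetilde x}_1$.

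Next I would reduce the angle to a scalar line search. A generic vector in the plane is $\vec z = \vec{\widetilde x}_1 - \tau\vec r$; projecting off $\vec x_1$ and exploiting $M\vec x_1 = \vec 0$ (so that $M\vec f$ is already orthogonal to $\vec x_1$ and $M(\vec f)_\perp = M\vec f$) shows that the component of $\vec z$ orthogonal to $\vec x_1$ equals $(I - \tau M)(\vec f)_\perp$ up to terms of order $\|\vec f\|^2$ stemming from the $N\vec{\widetilde x}_1$ correction and from $\xi = 1 + \Landau O(\|\vec f\|^2)$. Since $(\vec f)_\perp$ lies in the complement of the kernel of $M$, where $M$ is positive definite with eigenvalues $\lambda_1 \ge \dots \ge \lambda_{n-1} > 0$, I expand $(\vec f)_\perp$ in the eigenvectors of $M$ and minimize $\|(I - \tau M)(\vec f)_\perp\|^2$ over $\tau$; the degree-one Chebyshev (equioscillation) estimate $\min_\tau \max_{\lambda\in[\lambda_{n-1},\lambda_1]} |1 - \tau\lambda| = (\kappa-1)/(\kappa+1)$ then gives $\min_\tau\|(I-\tau M)(\vec f)_\perp\|^2 \le \left(\frac{\kappa-1}{\kappa+1}\right)^2\|(\vec f)_\perp\|^2$. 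Dividing by $\|\vec z\|^2 = \|\vec x_1\|^2\bigl(1 + \Landau O(\|\vec f\|)\bigr)$ and using $\|(\vec f)_\perp\|^2/\|\vec x_1\|^2 = \sin^2(\vec{\widetilde x}_1,\vec x_1) + \Landau O(\|\vec f\|^3)$ yields the claimed bound.

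The main obstacle is the careful bookkeeping forced by two competing geometries: the perturbation $\vec f$ is defined through $N$-orthogonality to $\vec x_1$, whereas $\vec r$, the projection onto $\vec x_1^\perp$, and the reported angle all live in the Euclidean inner product. I must therefore verify that every mismatch between these decompositions---the $N\vec{\widetilde x}_1$ term in $\vec r$, the gap between $\vec f$ and $(\vec f)_\perp$, the deviation of $\xi$ from $1$, and the replacement of $\widetilde c_1^2$ by $c_1^2$---contributes only at order $\|\vec f\|^3$ to $\sin^2$, so that the scalar Chebyshev model governs the leading behavior, and I must confirm that the eigenvalues entering the min--max are exactly the $\lambda_i$ of the restricted $M$ fixed in the hypothesis. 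Once these higher-order estimates are in place, the Chebyshev step supplies the factor $\left(\frac{\kappa-1}{\kappa+1}\right)^2$ and completes the proof.
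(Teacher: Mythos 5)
Your argument is correct in substance, but it is a genuinely different proof from the one in the paper. The paper never performs a line search: it uses the exact Euclidean orthogonality $\vec r \perp \vec{\widetilde x}_1$ (a fact you never need) to split the two-dimensional angle exactly,
\begin{equation*}
  \sin^2([\vec{\widetilde x}_1\;\vec r],\vec x_1)
  = \Bigl(1 - \frac{\cos^2(\vec r,\vec x_1)}{\sin^2(\vec{\widetilde x}_1,\vec x_1)}\Bigr)\sin^2(\vec{\widetilde x}_1,\vec x_1),
\end{equation*}
then expands $\vec r = \xi^2 M\vec f + \Landau O(\|\vec f\|^2)$ and $\vec r^*\vec x_1 = \pm\xi\,\vec f^*M\vec f$, lower-bounds the cosine ratio by $|\vec f^*M\vec f|^2/(\|M\vec f\|^2\|\vec f\|^2)$ up to $\Landau O(\|\vec f\|)$, and finishes with the Kantorovich inequality, the factor $((\kappa-1)/(\kappa+1))^2$ arising as $1 - 4\lambda_1\lambda_{n-1}/(\lambda_1+\lambda_{n-1})^2$. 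You instead run the classical steepest-descent argument: bound the subspace angle by $\min_\tau\sin^2(\vec{\widetilde x}_1-\tau\vec r,\vec x_1)$, show that the component orthogonal to $\vec x_1$ is $(I-\tau M)(\vec f)_\perp$ up to $\Landau O(\|\vec f\|^2)$, and apply the degree-one Chebyshev equioscillation bound. Kantorovich and Chebyshev are the two standard routes to the factor $(\kappa-1)/(\kappa+1)$, and they give identical constants here, so both proofs land on the same bound; your exact identity $\vec r = M\vec f - (\vec f^*M\vec f)\,N\vec{\widetilde x}_1$ is moreover sharper than the paper's asymptotic expansion, at the price of the extra line-search bookkeeping, which you carry out correctly.

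The one point you flag but leave open --- whether the eigenvalues entering your min--max are exactly the $\lambda_i$ of the statement --- deserves comment. Your Chebyshev step takes place on $\{\vec x_1\}^\perp = \ker(M)^\perp$, which is genuinely $M$-invariant, so your $\kappa$ is the ratio of the extreme nonzero eigenvalues of $M$ as a positive semidefinite matrix on $\mathbb{C}^n$. The proposition instead attaches $\lambda_1,\dots,\lambda_{n-1}$ to $M$ ``restricted to'' $\{N\vec x_1\}^\perp$; that subspace is not $M$-invariant, and the compression of $M$ to it has, in general, different (merely interlacing) eigenvalues, so under the most literal reading your constant is not verbatim the one in the statement. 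This is not a defect of your proof relative to the paper's, however: the paper's own Kantorovich step is applied to $\vec f$, which has a nonzero component in $\ker(M)$ (since $\vec f\perp N\vec x_1$ but $\vec f\not\perp\vec x_1$), so it too is only justified when the $\lambda_j$ are read as the nonzero eigenvalues of $M$. Under that reading your proof is complete, and its spectral step is in fact on firmer ground than the paper's.
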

\begin{proof}
  We have
  \begin{equation*}
    \widetilde c_1^2
    = \vec{\widetilde x}_1^*A^*\!A\vec{\widetilde x}_1
    = \xi^2 c_1^2 + \|A\vec f\|^2
    \quad\text{and}\quad
    \widetilde s_1^2
    = \vec{\widetilde x}_1^*B^*B\vec{\widetilde x}_1
    = \xi^2 s_1^2 + \|B\vec f\|^2,
  \end{equation*}
  and it follows that
  \begin{equation*}
      \vec r
      = \xi^2 (s_1^2 A^*\!A - c_1^2 B^*B) \vec f
      {} + (\|B\vec f\|^2 A^*\!A - \|A\vec f\|^2 B^*B)
      (\xi \vec x_1 + \vec f)
      = \xi^2 M \vec f + \Landau O(\|\vec f\|^2)
  \end{equation*}
  and
  \begin{equation*}
    \vec r^*\vec x_1
    = \xi \vec f^* (s_1^2 A^*\!A - c_1^2 B^*B) \vec f
    = \xi \vec f^* M \vec f.
  \end{equation*}
  Hence, $\vec{\widetilde x}_1 = \vec x_1$ if $\|\vec r\| = 0$ for
  $\vec{\widetilde x}_1$ sufficiently close to $\vec x_1$ and we are
  done. Otherwise, $\vec r$ is nonzero and perpendicular to
  $\vec{\widetilde x}_1$, so that
  \begin{equation*}
    \sin^2([\vec{\widetilde x}_1\; \vec r],\vec x_1)
    = 1 - \cos^2(\vec{\widetilde x}_1,\vec x_1) - \cos^2(\vec r,\vec x_1)
    = \left(
        1 - \frac{\cos^2(\vec r,\vec x_1)}{\sin^2(\vec{\widetilde x}_1,\vec x_1)}
      \right) \sin^2(\vec{\widetilde x}_1, \vec x_1).
  \end{equation*}
  Combining the above expressions, and using the fact that nontrivial
  orthogonal projectors have unit norm, yields
  \begin{equation*}
    \frac{\cos^2(\vec r,\vec x_1)}{\sin^2(\vec{\widetilde x}_1,\vec x_1)}
    = \frac{|\vec r^*\vec x_1|^2}{\|\vec r\|^2
      \|(I - \vec{\widetilde x}_1 \vec{\widetilde x}_1^*)\vec x_1\|^2}
    \ge \frac{|\vec f^* M \vec f|^2}{%
      \|M \vec f\|^2\|\vec f\|^2} + \Landau
      O(\|\vec f\|).
  \end{equation*}
  Using the Kantorovich inequality (cf., e.g.,~\autocite[p.~68]{GR97nr})
  we obtain
  \begin{equation*}
    \begin{split}
      \sin^2([\vec{\widetilde x}_1\; \vec r],\vec x_1)
      &\le \left(
        1 - \frac{4\lambda_1\lambda_{n-1}}{(\lambda_1+\lambda_{n-1})^2}
      \right) \sin^2(\vec{\widetilde x}_1, \vec x_1)
      {} + \Landau O(\|\vec f\|\sin^2(\vec{\widetilde x}_1, \vec x_1)) \\
      &= \left(\frac{\kappa - 1}{\kappa + 1} \right)^2
      \sin^2(\vec{\widetilde x}_1, \vec x_1)
      {} + \Landau O(\|\vec f\|\sin^2(\vec{\widetilde x}_1, \vec x_1)).
    \end{split}
  \end{equation*}
  Finally, $\widetilde c_1^2 + \widetilde s_1^2 = 1$ implies
  $\|\vec{\widetilde x}_1\| \ge \sigma_{\min}(X)$, so that
  $\sin(\vec{\widetilde x}_1, \vec x) \le \sigma_{\min}^{-1}(X)\,
  \|\vec f\| = \Landau O(\|\vec f\|)$.
\end{proof}


The condition number $\kappa$ from Proposition~\ref{tcgsvd:thm:asymconv}
may be large in practice, in which case the quantity
$(\kappa-1)/(\kappa+1)$ is close to 1. However, this upper bound may be
rather pessimistic and we will see considerably faster convergence
during the numerical tests in Section~\ref{tcgsvd:sec:num}.



\section[\texorpdfstring{$B^*B$}{B\^{}*B}-orthonormal GDGSVD]%
{$\bm{B^*B}$-orthonormal GDGSVD}
\label{tcgsvd:sec:bborth}


In the previous section we have derived the GDGSVD algorithm for an
orthonormal basis of $\subspace W_k$. An alternative is to construct a
$B^*B$-orthonormal basis of $\subspace W_k$, which allows us to use the
SVD instead of the slower GSVD for the projected problem, as well as
reduce the amount of work necessary for a restart. Another benefit is
that the $B^*B$-orthonormality reveals the connection between GDGSVD and
JDGSVD, a Jacobi--Davidson-type algorithm for the GSVD
\autocite{MH09jdgsvd}.

The derivation of $B^*B$-orthonormal GDGSVD is similar to the derivation
of Algorithm~\ref{tcgsvd:alg:gdgsvd}. Suppose that $B^*B$ is
nonsingular, let $\widehat W_k$ be a basis of $\subspace W_k$ satisfying
$\widehat W_k^* B^*B \widehat W_k = I$, and compute the QR-decomposition
\begin{equation}\label{tcgsvd:eq:BBAW}
  A\widehat W_k = \widehat U_k\widehat H_k,
\end{equation}
where $\widehat U$ has orthonormal columns and $\widehat H_k$ is
upper-triangular. Note that \eqref{tcgsvd:eq:BBAW} can be obtained from
the QR-decompositions in \eqref{tcgsvd:eq:AWBW} by setting $\widehat W_k
= W_k K_k^{-1}$, $\widehat U_k = U_k$, and $\widehat H_k = H_k
K_k^{-1}$. If $\widehat H_k = \widetilde U \Sigma \widetilde W^*$ is the
SVD of $\widehat H_k$; then
\begin{equation*}
  A(\widehat W_k \widetilde W) = (\widehat U_k \widetilde U) \Sigma,
\end{equation*}
which can be partitioned as
\begin{equation}\label{tcgsvd:eq:bbpart}
  A \begin{bmatrix} \widehat W_k \widetilde W_1 & \widehat W_k
  \widetilde W_2 \end{bmatrix}
  = \begin{bmatrix} \widehat U_k \widetilde U_1 & \widehat U_k
  \widetilde U_2 \end{bmatrix}
  \begin{bmatrix} \Sigma_{1} \\ & \Sigma_{2} \end{bmatrix}
\end{equation}
and truncated to $A\widehat W_k \widetilde W_1 = \widehat U_k
\widetilde U_1 \Sigma_{1}$.  With $\vec{\widehat u}_1 = \widehat U_k
\widetilde U \vec e_1$ and $\vec{\widehat w}_1 = \widehat W_k \widetilde
W \vec e_1$ we get the residual
\begin{equation*}
  \vec r = (A^*\!A - \sigma_1^2 B^*B)\vec{\widehat w_1}
  = \sigma_1 (A\vec{\widehat u}_1 - \sigma_1 B^*B\vec{\widehat w_1})
\end{equation*}
and the expansion vector $\vec{\widehat r} = A\vec{\widehat u}_1 -
\sigma_1 B^*B\vec{\widehat w_1}$.  The expansion vector $\vec{\widehat
r}$ is orthogonal to $\widehat W_k$ in exact arithmetic, but should in
practice still be orthogonalized with respect to $\widehat W_k$ prior to
$B^*B$-orthogonalization in order to improve numerical stability and
accuracy \autocite[Sec.~3.5]{Hig02}. Finally, in the $B^*B$-orthonormal
case the suggested stopping condition \eqref{tcgsvd:eq:cnvtol} becomes
\begin{equation}\label{tcgsvd:eq:BBcnvtol}
  \frac{\|\vec r\|}{(\|A^*\!A\| + \sigma_1^2\|B^*B\|) \|\vec{\widehat w}_1\|}
  \le \frac{\sqrt{n}\,\|\vec r\|}{(\|A^*\!A\|_1 + \sigma_1^2\|B^*B\|_1) \|\vec{\widehat w}_1\|}
  \le \tau
\end{equation}
for some tolerance $\tau$.  The algorithm is summarized below in
Algorithm~\ref{tcgsvd:alg:BBgdgsvd}, where $\widehat V_k = B \widehat
W_k$ has orthonormal columns.


\begin{algorithm}[$B^*B$-orthonormal GDGSVD]\label{tcgsvd:alg:BBgdgsvd}
\strut\\*
  \textbf{Input:} Matrix pair $(A,B)$, starting vector $\vec w_0$,
    minimum and maximum dimensions $j < \ell$. \\
  \textbf{Output:} Orthonormal $\widehat U_j$, $B^*B$-orthonormal
    $\widehat W_j$, and diagonal $\Sigma_j$ satisfying $A\widehat W_j =
    \widehat U_j \Sigma_j$. \\
  \tab[\phantom01.] Let $\widehat W_0 = \widehat V_0 = []$ and
    $\vec{\widehat r} = \vec w_0$. \\
  \tab[\phantom02.] \textbf{for} \textit{number of restarts}
    \textbf{and} \emph{not converged (cf., e.g.,~\eqref{tcgsvd:eq:BBcnvtol})}
    \textbf{do} \\
  \tab[\phantom03.] \tab \textbf{for} $k = 1$, 2, \dots, $\ell$
    \textbf{do} \\
  \tab[\phantom04.] \tab\tab $\vec{\widehat w}_k = (I - \widehat W_{k-1}
    (\widehat W_{k-1}^* \widehat W_{k-1})^{-1} \widehat W_{k-1}^*)
    \vec{\widehat r}$. \\
  \tab[\phantom05.] \tab\tab Compute $\vec{\widehat v}_k = B
    \vec{\widehat w}_k$. \\
  \tab[\phantom06.] \tab\tab $B^*B$-orthogonalize: $\vec{\widehat
    w}_k = \vec{\widehat w_k} - \widehat W_{k-1} \widehat
    V_{k-1}^* \vec{\widehat v}_k$. \\
  \tab[\phantom07.] \tab\tab $\vec{\widehat v}_k = (I - \widehat
    V_{k-1} \widehat V_{k-1}^*) \vec{\widehat v}_k$. \\
  \tab[\phantom08.] \tab\tab $\vec{\widehat w}_k = \vec{\widehat
    w}_k / \|\vec{\widehat v}_k\|$ and $\vec{\widehat v}_k =
    \vec{\widehat v}_k / \|\vec{\widehat v}_k\|$. \\
  \tab[\phantom09.] \tab\tab Update the QR-decomposition $A\widehat
    W_k = \widehat U_k \widehat H_k$. \\
  \tab[10.] \tab\tab Compute the SVD $\widehat H_k =
    \widetilde U \Sigma \widetilde W^*$. \\
  \tab[11.] \tab\tab $\vec{\widehat r} = A^* \widehat U_k
    \vec{\widetilde u}_1 - \sigma_1 B^* \widehat V_k \vec{\widetilde
    w}_1$. \\
  \tab[12.] \tab\tab \textbf{if} $j \le k$ \textbf{and}
    \textit{converged (cf., e.g.,~\eqref{tcgsvd:eq:BBcnvtol})}
    \textbf{then break} \\
  \tab[13.] \tab \textbf{end} \\
  \tab[14.] \tab Partition $\widetilde U$, $\Sigma$, and
    $\widetilde W$ according to \eqref{tcgsvd:eq:bbpart}. \\
  \tab[15.] \tab Let $\widehat U_j = \widehat U_k \widetilde U_1$,
    $\widehat V_j = \widehat V_k \widetilde W_1$, and $\widehat W_j =
    \widehat W_k \widetilde W_1$. \\
  \tab[16.] \tab Let $H_j = \Sigma_1$. \\
  \tab[17.] \textbf{end}
\end{algorithm}


The product $B^*B$ may be arbitrarily close to singularity, and a
severely ill-conditioned $B^*B$ may prove to be problematic despite the
additional orthogonalization step in
Algorithm~\ref{tcgsvd:alg:BBgdgsvd}.  Therefore, we would generally
advise against using Algorithm~\ref{tcgsvd:alg:BBgdgsvd}, and recommend
using Algorithm~\ref{tcgsvd:alg:gdgsvd} and orthonormal bases instead.
However, $B^*B$-orthonormal GDGSVD relates nicely to JDGSVD on a
theoretical level, regardless of the potential practical issues.
In JDGSVD the search spaces $\widehat U_k$ and $\widehat W_k$ are
repeatedly updated with the vectors $\vec s \perp \vec{\widehat u}_1$
and $\vec t \perp \vec{\widehat w}_1$, which are obtained by solving
correction equations. Picking the updates
\begin{equation*}
  \vec s = (I - \vec{\widehat u}_1 \vec{\widehat u}_1^*) A\vec r
  \quad\text{and}\quad
  \vec t = \vec r,
\end{equation*}
instead of solving the correction equations gives JDGSVD the same
subspace expansions as $B^*B$-orthogonal GDGSVD.  Furthermore, standard
extraction in JDGSVD is performed by computing the SVD of $\widehat
U_k^* A \widehat W_k$, which is identical to the extraction in
$B^*B$-orthonormal GDGSVD.
For harmonic Ritz extraction, JDGSVD uses the harmonic Ritz vectors
$\widehat U_k \vec c$ and $\widehat W_k \vec d$, where $\vec c$
and $\vec d$ solve
\begin{equation*}
  \widehat W_k^* A^*\!A \widehat W_k \vec d
  = \sigma^2 \widehat W_k^* B^*B \widehat W_k \vec d
  \quad\text{and}\quad
  \vec c = \sigma (\widehat W_k^* A^* \widehat U_k)^{-1}
    \widehat W_k^* B^*B \widehat W_k \vec d.
\end{equation*}
The above simplifies to
\begin{equation*}
  \widetilde W \Sigma^2 \widetilde W^* \vec d = \sigma^2 \vec d
  \quad\text{and}\quad
  \vec c = \sigma \widetilde U \Sigma^{-1} \widetilde W^* \vec d,
\end{equation*}
for $B^*B$-orthonormal GDGSVD and produces the same primitive Ritz
vectors as the standard extraction.  To summarize, JDGSVD coincides with
$B^*B$-orthonormal GDGSVD for specific expansion vectors, and there is
no difference between standard and harmonic extraction in
$B^*B$-orthonormal GDGSVD.  The difference in practice between the two
methods is primarily caused by the different expansion phases, where
GDGSVD uses residual-type vectors and JDGSVD normally solves correction
equations.  In the next section we will discuss how the subspace
expansion for GDGSVD may be further improved.



\section{Multidirectional subspace expansion}
\label{tcgsvd:sec:mdgsvd}

While the residual vector $\vec{r}$ from \eqref{tcgsvd:eq:gdres} is a
practical choice for the subspace expansion, it is not necessarily
optimal. In fact, neither is the vector given by
Proposition~\ref{tcgsvd:thm:optRx}, which is only the optimal
``residual-type'' expansion vector. In their most general form, the
desired expansion vectors are
\begin{equation}\label{tcgsvd:eq:genmd}
  \vec a - \vec b,
  \quad\text{where}\quad
  \vec a = (I - W_k W_k^*) A^*\!AW_k \vec{c_\star}
  \quad\text{and}\quad
  \vec b = (I - W_k W_k^*) B^*BW_k \vec{d_\star},
\end{equation}
for some ``optimal'' choice of $\vec{c_\star}$ and $\vec{d_\star}$. The
following proposition characterizes $\vec{c_\star}$ and $\vec{d_\star}$.


\begin{proposition}\label{tcgsvd:thm:RSex}
  Let $R_k$ and $\vec r$ be defined as in
  Proposition~\ref{tcgsvd:thm:optRx}, and assume that $R_k$ has full
  column rank. If $R_k^*A^*\!AW_k$ and $R_k^*B^*BW_k$ are nonsingular
  and if $\vec s = S_k \vec{d}$ with
  \begin{equation*}
    S_k = (A^*\!AW_k - W_k H_k^*H_k) (R_k^*A^*\!AW_k)^{-1}
    {} - (B^*BW_k - W_k K_k^*K_k) (R_k^*B^*BW_k)^{-1};
  \end{equation*}
  then
  \begin{equation*}
    \cos^2 (\vec x_1, [W_k\; \vec r\; \vec s])
    = \cos^2 (\vec x_1, W_k) + \cos^2 (\vec x_1, \vec r) + \cos^2
    (\vec x_1, \vec s)
  \end{equation*}
  is maximized for $\vec{c} = R_k^+\vec x_1$ and $\vec{d} = S_k^+ \vec
  x_1$. Moreover, for any $\vec{c}$, $\vec{d}$, and scalar $t$, the
  linear combination $R_k \vec{c} + tS_k \vec{d}$ can be written in the
  form of \eqref{tcgsvd:eq:genmd}. The mapping from $\vec{c}$ and
  $\vec{d}$ to $\vec{c_\star}$ and $\vec{d_\star}$ is one-to-one if $t
  \neq 0$.
\end{proposition}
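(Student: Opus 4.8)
The plan is to follow the blueprint of Proposition~\ref{tcgsvd:thm:optRx}: once the columns of $W_k$, the vector $\vec r = R_k\vec c$, and the vector $\vec s = S_k\vec d$ are known to be mutually orthogonal, the orthogonal projector onto $\linspan[W_k\;\vec r\;\vec s]$ splits as $W_kW_k^* + \vec r\vec r^*/(\vec r^*\vec r) + \vec s\vec s^*/(\vec s^*\vec s)$, which immediately yields the additive decomposition of $\cos^2(\vec x_1,[W_k\;\vec r\;\vec s])$ (here $W_k$ has orthonormal columns, while $\vec r$ and $\vec s$ need only be nonzero). The crux is therefore to establish $W_k^*S_k = 0$ and $R_k^*S_k = 0$; the maximization and the remaining two claims then follow with little extra effort.

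To verify the orthogonality I would first rewrite the two numerators in $S_k$. Since $U_k$ has orthonormal columns, $H_k^*H_k = (AW_k)^*(AW_k) = W_k^*A^*\!AW_k$, and likewise $K_k^*K_k = W_k^*B^*BW_k$; hence $A^*\!AW_k - W_kH_k^*H_k = (I - W_kW_k^*)A^*\!AW_k$ and $B^*BW_k - W_kK_k^*K_k = (I - W_kW_k^*)B^*BW_k$. This gives the compact form $S_k = (I-W_kW_k^*)A^*\!AW_k(R_k^*A^*\!AW_k)^{-1} - (I-W_kW_k^*)B^*BW_k(R_k^*B^*BW_k)^{-1}$, from which $W_k^*S_k = 0$ is immediate. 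For the second identity I would use the fact, already recorded in Proposition~\ref{tcgsvd:thm:optRx}, that $W_k^*R_k = 0$, so that $R_k^*(I-W_kW_k^*) = R_k^*$; substituting this into $R_k^*S_k$ collapses the two terms to $R_k^*A^*\!AW_k(R_k^*A^*\!AW_k)^{-1} - R_k^*B^*BW_k(R_k^*B^*BW_k)^{-1} = I - I = 0$. With $\vec r$ and $\vec s$ thus mutually orthogonal and both orthogonal to $W_k$, the additive split holds, and because $\cos^2(\vec x_1,W_k)$ is fixed while $\vec r$ and $\vec s$ range over the orthogonal column spaces of $R_k$ and $S_k$, each of $\cos^2(\vec x_1,\vec r)$ and $\cos^2(\vec x_1,\vec s)$ is maximized independently by the orthogonal projection of $\vec x_1$ onto the respective space, that is, at $\vec c = R_k^+\vec x_1$ and $\vec d = S_k^+\vec x_1$.

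For the penultimate claim I would simply collect coefficients. Writing $G = (H_k^*H_k+K_k^*K_k)^{-1}$ and using $W_k^*R_k = 0$, the same factorization gives $R_k = (I-W_kW_k^*)A^*\!AW_k\,GK_k^*K_k - (I-W_kW_k^*)B^*BW_k\,GH_k^*H_k$, so that $R_k\vec c + tS_k\vec d = (I-W_kW_k^*)A^*\!AW_k\vec{c_\star} - (I-W_kW_k^*)B^*BW_k\vec{d_\star}$ with $\vec{c_\star} = GK_k^*K_k\vec c + t(R_k^*A^*\!AW_k)^{-1}\vec d$ and $\vec{d_\star} = GH_k^*H_k\vec c + t(R_k^*B^*BW_k)^{-1}\vec d$. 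This is exactly the shape of \eqref{tcgsvd:eq:genmd}.

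Finally, the map $(\vec c,\vec d)\mapsto(\vec{c_\star},\vec{d_\star})$ is linear, so injectivity amounts to a trivial kernel. If $\vec{c_\star} = \vec{d_\star} = 0$ then $\vec a = \vec b = 0$ and hence $R_k\vec c + tS_k\vec d = 0$; multiplying on the left by $R_k^*$ and invoking $R_k^*S_k = 0$ gives $R_k^*R_k\vec c = 0$, so $\vec c = 0$ because $R_k$ has full column rank. Substituting $\vec c = 0$ into the formula for $\vec{c_\star}$ leaves $t(R_k^*A^*\!AW_k)^{-1}\vec d = 0$, and since $t\neq 0$ and $R_k^*A^*\!AW_k$ is nonsingular we conclude $\vec d = 0$. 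I expect the main obstacle to be the orthogonality $R_k^*S_k = 0$: reaching it hinges on spotting the $(I-W_kW_k^*)$ factorization of the numerators together with $W_k^*R_k = 0$, and it is this identity, rather than the subsequent maximization or the bookkeeping of the coefficient map, that carries the real content. The role of $t\neq0$ is also worth flagging, since for $t=0$ the second block column of the coefficient map vanishes and injectivity fails.
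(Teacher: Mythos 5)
Your proof is correct. For the additive decomposition of $\cos^2$ and the rewriting of $R_k\vec c + tS_k\vec d$ in the form \eqref{tcgsvd:eq:genmd}, you follow essentially the paper's own route: the paper's proof simply instructs the reader to ``use that $W_k^*R_k = W_k^*S_k = R_k^*S_k = 0$'' and then collects coefficients into $\vec{c_\star}$ and $\vec{d_\star}$ exactly as you do; your explicit verification of $W_k^*S_k = R_k^*S_k = 0$ (via $H_k^*H_k = W_k^*A^*\!AW_k$, $K_k^*K_k = W_k^*B^*BW_k$, and $R_k^*(I - W_kW_k^*) = R_k^*$) supplies the detail the paper leaves implicit, and you are right that this identity is where the real content lies. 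The genuine difference is in the one-to-one claim. The paper proves invertibility of the block coefficient matrix by a determinant computation: it factors out a block-diagonal matrix with blocks $(R_k^*A^*\!AW_k)^{-1}$ and $(R_k^*B^*BW_k)^{-1}$, then reduces the remaining block determinant, by subtracting one block row from the other, to $\det(R_k^*R_k) \neq 0$. You instead show the kernel is trivial: from $\vec{c_\star} = \vec{d_\star} = \vec 0$ you get $R_k\vec c + tS_k\vec d = \vec 0$, then $\vec c = \vec 0$ by left-multiplying with $R_k^*$ and invoking $R_k^*S_k = 0$ together with full column rank, and finally $\vec d = \vec 0$ from $t \neq 0$ and the nonsingularity of $R_k^*A^*\!AW_k$. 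Both arguments rest on the same three hypotheses; yours is more elementary (no block determinant manipulations) and makes transparent where each hypothesis enters, while the paper's yields the determinant of the coefficient map in closed form. Incidentally, your coefficient formulas ($\vec{c_\star}$ containing $K_k^*K_k\vec c$ and $\vec{d_\star}$ containing $H_k^*H_k\vec c$) agree with the paper's displayed derivation; the block matrix the paper subsequently writes down has those two blocks interchanged, an apparent typo that your version avoids.
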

\begin{proof}
  For the first part of the proof, use that $W_k^*R_k = W_k^*S_k =
  R_k^*S_k = 0$.  For the second part, define the shorthand $M =
  H_k^*H_k + K_k^*K_k$ and recall that
  \begin{equation*}
    R_k = A^*\!AW_k M^{-1} K_k^*K_k - B^*BW_k M^{-1} H_k^*H_k.
  \end{equation*}
  Hence, for any $\vec{c}$, $\vec{d}$ and scalar $t$ we have
  \begin{equation*}
    \begin{split}
      R_k \vec{c} + t S_k \vec{d}
      &= (I - W_k W_k^*) R_k \vec{c}
      {} + t (I - W_k W_k^*) S_k \vec{d} \\
      &= (I - W_k W_k^*) A^*\!A W_k \big(
      M^{-1} K_k^*K_k \vec{c}
      {} + t (R_k^* A^*\!A W_k)^{-1} \vec{d} \big) \\
      &{} - (I - W_k W_k^*) B^*B W_k \big(
      M^{-1} H_k^*H_k \vec{c}
      {} + t (R_k^* B^*B W_k)^{-1} \vec{d} \big) \\
      &= \vec a - \vec b,
    \end{split}
  \end{equation*}
  where $\vec a$ and $\vec b$ are defined as in \eqref{tcgsvd:eq:genmd}
  for the $\vec{c_\star}$ and $\vec{d_\star}$ satisfying
  \begin{equation*}
    \begin{bmatrix} \vec{c_\star} \\ \vec{d_\star} \end{bmatrix}
    =
    \begin{bmatrix}
      M^{-1} H_k^*H_k & t (R_k^*\!A^*\!AW_k)^{-1} \\
      M^{-1} K_k^*K_k & t (R_k^*B^*BW_k)^{-1}
    \end{bmatrix}
    \begin{bmatrix} \vec{c} \\ \vec{d} \end{bmatrix}.
  \end{equation*}
  Finally, the matrix above is invertible if
  \begin{equation*}
    t \det \begin{bmatrix}
      (R_k^*\!A^*\!AW_k)^{-1} \\ & (R_k^*B^*BW_k)^{-1}
    \end{bmatrix}
    \cdot \det \begin{bmatrix}
      R_k^*\!A^*\!AW_k M^{-1} H_k^*H_k & I \\
      R_k^*B^*BW_k M^{-1} K_k^*K_k & I
    \end{bmatrix}
    \neq 0,
  \end{equation*}
  where the first determinant is nonzero because its subblocks are
  invertible, and the second determinant equals
  \begin{equation*}
    \det(R_k^*\!A^*\!AW_k M^{-1} H_k^*H_k
    {} - R_k^*B^*BW_k M^{-1} K_k^*K_k)
    = \det(R_k^*R_k) \neq 0
  \end{equation*}
  since $R_k$ has full column rank.
\end{proof}



Let $\vec r$ and $\vec s$ be two nonzero orthogonal vectors; then the
locally optimal search direction in $\subspace S = \linspan\{\vec r,
\vec s\}$ is the projection of the desired generalized singular vector
$\vec x_1$ onto $\subspace S$, and is given by
\begin{equation}\label{tcgsvd:eq:xontors}
  \frac{\vec r^* \vec x_1}{\vec r^*\vec r} \vec r
  + \frac{\vec s^* \vec x_1}{\vec s^*\vec s} \vec s.
\end{equation}
The remaining orthogonal direction in $\subspace S$ is
\begin{equation}\label{tcgsvd:eq:xperprs}
  (\vec x_1^* \vec s) \vec r - (\vec x_1^* \vec r) \vec s,
\end{equation}
which is perpendicular to $\vec x_1$.  It is usually impossible to
compute the vectors from Proposition~\ref{tcgsvd:thm:RSex} and the
linear combination in \eqref{tcgsvd:eq:xontors} without a priori
knowledge of $\vec x_1$.  Therefore, the idea is to pick $\vec r$ and
$\vec s$ or $\vec a$ and $\vec b$ based on a different criterion, expand
the search space with both vectors, and to rely on the extraction
process to determine a good new search direction.  If successful, then
\eqref{tcgsvd:eq:xperprs} suggests that there is at least one direction
in the enlarged search space that is (nearly) perpendicular to $\vec
x_1$.  This direction may be removed to avoid excessive growth of the
search space.

For example, we could use the approximate generalized singular pair
and corresponding vectors from Section~\ref{tcgsvd:sec:gdgsvd} and
choose the vectors
\begin{equation*}
  \vec a = \widetilde s_1^2 (I - W_k W_k^*) A^*\!A \vec{\widetilde x}_1
  \quad\text{and}\quad
  \vec b = \widetilde c_1^2 (I - W_k W_k^*) B^*B \vec{\widetilde x}_1
\end{equation*}
for expansion, and set
\begin{equation*}
  \vec r = \vec a - \vec b
  \quad\text{and}\quad
  \vec s = (\vec r^*\vec b) \vec a - (\vec r^*\vec a) \vec b,
\end{equation*}
since the residual norm $\|\vec r\|$ is required anyway.  Moreover, this
choice ensures at least the same improvement per iteration as the
residual expansion from generalized Davidson.  After the expansion and
extraction, a low-quality search direction may be removed.  Below we
describe the process in more detail.

In Section~\ref{tcgsvd:sec:gdgsvd} we have seen that
$A^*\!A\vec{\widetilde x}_1 = \widetilde c_1 A^*\vec{\widetilde u}_1$
and $B^*B\vec{\widetilde x}_1 = \widetilde s_1 B^*\vec{\widetilde v}_1$;
hence, suppose that $W_{k+2}$ is obtained by extending $W_k$ with the
$A^* \vec{\widetilde u}_1$ and $B^* \vec{\widetilde v}_1$ after
orthonormalization. Then we can compute the reduced QR-decompositions
\begin{equation}\label{tcgsvd:eq:mdqr}
  AW_{k+2} = U_{k+2} H_{k+2}
  \quad\text{and}\quad
  BW_{k+2} = V_{k+2} K_{k+2},
\end{equation}
and the triangular-form GSVD
\begin{equation*}
  \begin{split}
    H_{k+2}
    \begin{bmatrix}
      \widetilde W_{k+1} & \vec{\widetilde w}_{k+2}
    \end{bmatrix}
    &=
    \begin{bmatrix}
      \widetilde U_{k+1} & \vec{\widetilde u}_{k+2}
    \end{bmatrix}
    \begin{bmatrix}
      \widetilde C_{k+1} \\ & \widetilde c_{k+2}
    \end{bmatrix}
    \begin{bmatrix}
      \widetilde R_{k+1} & \vec{\widetilde r}_{k+1,k+2} \\
      & \widetilde r_{k+2,k+2}
    \end{bmatrix},
    \\
    K_{k+2}
    \begin{bmatrix}
      \widetilde W_{k+1} & \vec{\widetilde w}_{k+2}
    \end{bmatrix}
    &=
    \begin{bmatrix}
      \widetilde V_{k+1} & \vec{\widetilde v}_{k+2}
    \end{bmatrix}
    \begin{bmatrix}
      \widetilde S_{k+1} \\ & \widetilde s_{k+2}
    \end{bmatrix}
    \begin{bmatrix}
      \widetilde R_{k+1} & \vec{\widetilde r}_{k+1,k+2} \\
      & \widetilde r_{k+2,k+2}
    \end{bmatrix},
  \end{split}
\end{equation*}
where we may assume without loss of generality that $(\widetilde
c_{k+2}, \widetilde s_{k+2})$ is the generalized singular pair furthest
from the desired pair. By combining the partitioned decompositions above
with \eqref{tcgsvd:eq:mdqr}, we see that the objective becomes the
removal of $\linspan\{W_{k+2} \vec{\widetilde w}_{k+2}\}$ from the
search space.  One way to truncate this unwanted direction from the
search space, is to perform a restart conform
Section~\ref{tcgsvd:sec:gdgsvd} and compute
\begin{equation}\label{tcgsvd:eq:mdrestart}
  U_{k+2} \widetilde U_{k+1},
  \qquad
  V_{k+2} \widetilde V_{k+1},
  \qquad
  W_{k+2} \widetilde W_{k+1},
  \qquad
  \widetilde C_{k+1} \widetilde R_{k+1},
  \quad\text{and}\quad
  \widetilde S_{k+1} \widetilde R_{k+1}
\end{equation}
explicitly. However, with $\mathcal O(nk^2)$ floating-point
operations per iteration, the computational cost of this approach is too
high.  The key to a faster method is to realize that we only need to be
able to truncate
\begin{equation*}
  U_{k+2}\vec{\widetilde u}_{k+2},
  \qquad
  V_{k+2}\vec{\widetilde v}_{k+2},
  \qquad
  W_{k+2}\vec{\widetilde w}_{k+2},
  \qquad
  \widetilde c_{k+2},
  \quad\text{and}\quad
  \widetilde s_{k+2},
\end{equation*}
but do not require the matrices in \eqref{tcgsvd:eq:mdrestart}.  To this
end, let $P$, $Q$, and $Z$ be Householder reflections of the form
\begin{equation*}
  P = I - 2\frac{\vec p\vec p^*}{\vec p^*\vec p},
  \qquad
  Q = I - 2\frac{\vec q\vec q^*}{\vec q^*\vec q},
  \quad\text{and}\quad
  Z = I - 2\frac{\vec z\vec z^*}{\vec z^*\vec z},
\end{equation*}
with $\vec p$, $\vec q$, and $\vec z$ such that
\begin{equation*}
  P\vec e_{k+2} = \vec{\widetilde u}_{k+2},
  \qquad
  Q\vec e_{k+2} = \vec{\widetilde v}_{k+2},
  \qquad\text{and}\qquad
  Z\vec e_{k+2} = \vec{\widetilde w}_{k+2}.
\end{equation*}
%
Applying the Householder matrices yields
\begin{equation}\label{tcgsvd:eq:refl}
  A(W_{k+2} Z) = (U_{k+2} P) (P^* H_{k+2} Z)
  \quad\text{and}\quad
  B(W_{k+2} Z) = (V_{k+2} Q) (Q^* K_{k+2} Z),
\end{equation}
which can be computed in $\Landau O(nk)$ through rank-1 updates.  It is
straightforward to verify that the bottom rows of $P^* H_{k+2} Z$ and
$Q^* K_{k+2} Z$ are multiples of $\vec e_{k+2}^*$, e.g.,
\begin{equation*}
  \vec e_{k+2}^* P^* H_{k+2} Z
  = \vec{\widetilde u}_{k+2}^* (\widetilde U \widetilde C \widetilde R \widetilde W^*)Z
  = \widetilde c_{k+2} \widetilde r_{k+2,k+2} \vec{\widetilde w}_{k+2}^* Z
  = \widetilde c_{k+2} \widetilde r_{k+2,k+2} \vec e_{k+2}^*.
\end{equation*}
As a result, \eqref{tcgsvd:eq:refl} can be partitioned as
\begin{equation*}
  \begin{split}
    A \begin{bmatrix} W_{k+1} & W_{k+2} \vec{\widetilde w}_{k+2} \end{bmatrix}
    &=\begin{bmatrix} U_{k+1} & U_{k+2} \vec{\widetilde u}_{k+2} \end{bmatrix}
      \begin{bmatrix} H_{k+1} & \times \\& \widetilde c_{k+2} \widetilde
      r_{k+2,k+2} \end{bmatrix},
    \\
    B \begin{bmatrix} W_{k+1} & W_{k+2} \vec{\widetilde w}_{k+2} \end{bmatrix}
    &=\begin{bmatrix} V_{k+1} & V_{k+2} \vec{\widetilde v}_{k+2} \end{bmatrix}
      \begin{bmatrix} K_{k+1} & \times \\& \widetilde c_{k+2} \widetilde
      r_{k+2,k+2} \end{bmatrix},
    \end{split}
\end{equation*}
defining $U_{k+1}$, $V_{k+1}$, $W_{k+1}$, $H_{k+1}$, and $K_{k+1}$. This
partitioning can be truncated to obtain
\begin{equation}\label{tcgsvd:eq:trimqr}
  AW_{k+1} = U_{k+1} H_{k+1}
  \quad\text{and}\quad
  BW_{k+1} = V_{k+1} K_{k+1},
\end{equation}
where $U_{k+1}$, $V_{k+1}$, and $W_{k+1}$ have orthonormal columns, but
$H_{k+1}$ and $K_{k+1}$ are not necessarily upper-triangular.  The
algorithm is summarized below in Algorithm~\ref{tcgsvd:alg:mdgsvd}.


\begin{algorithm}[Multidirectional GSVD (MDGSVD)]\label{tcgsvd:alg:mdgsvd}
\strut\\*
  \textbf{Input:} Matrix pair $(A,B)$, starting vectors $\vec w_1$ and
    $\vec w_2$, minimum and maximum dimensions $j < \ell$. \\
  \textbf{Output:} $AW_{j} = U_{j} C_{j} R_{j}$ and $BW_{j} = V_{j}
    S_{j} R_{j}$ approximating a partial GSVD. \\
  \tab[\phantom01.] Set $W_0 = []$. \\
  \tab[\phantom02.] \textbf{for} \textit{number of restarts} 
    \textbf{and} \emph{not converged (cf., e.g.,~\eqref{tcgsvd:eq:cnvtol})}
    \textbf{do} \\
  \tab[\phantom03.]\tab  \textbf{for} $k = 0$, 1, \dots,
    $ \ell-2$ \textbf{do} \\
  \tab[\phantom04.]\tab\tab  Let $\vec w_{k+1} = (I - W_k
    W_k^*) \vec w_{k+1}$, and $\vec w_{k+1} = \vec
    w_{k+1} / \|\vec w_{k+1}\|$. \\
  \tab[\phantom05.]\tab\tab  Let $\vec w_{k+2} = (I - W_{k+1}
    W_{k+1}^*) \vec w_{k+2}$ and $\vec w_{k+2} = \vec
    w_{k+2} / \|\vec w_{k+2}\|$. \\
  \tab[\phantom06.]\tab\tab  Update the QR-decompositions \\
  \tab\tab\tab\tab $AW_{k+2} = U_{k+2} H_{k+2}$ and $BW_{k+2} = V_{k+2}
    K_{k+2}$.\\
  \tab[\phantom07.]\tab\tab  Compute the GSVD $H_{k+2} = \widetilde U
    \widetilde C \widetilde R \widetilde W^*$ and $K_{k+2} =
    \widetilde V \widetilde S \widetilde R \widetilde W^*$. \\
  \tab[\phantom08.]\tab\tab  Let $P$, $Q$, and $Z$ be Householder
    reflections such that\\
  \tab\tab\tab\tab
    $P\vec e_{k+2} = \vec{\widetilde u}_{k+2}$,
    $Q\vec e_{k+2} = \vec{\widetilde v}_{k+2}$, and
    $Z\vec e_{k+2} = \vec{\widetilde z}_{k+2}$. \\
  \tab[\phantom09.]\tab\tab   Let
    $U_{k+2} = U_{k+2} P$,
    $V_{k+2} = V_{k+2} Q$,
    $W_{k+2} = W_{k+2} Z$, \\
  \tab\tab\tab\tab
    $H_{k+2} = P^* H_{k+2} Z$, and
    $K_{k+2} = Q^* K_{k+2} Z$. \\
  \tab[10.]\tab\tab  $\vec w_{k+2} = A^*\vec{\widetilde u}_1$
    and $\vec w_{k+3} = B^*\vec{\widetilde v}_1$. \\
  \tab[11.]\tab\tab  \textbf{if} $j\le k$ \textbf{and}
    \textit{converged (cf., e.g.,~\eqref{tcgsvd:eq:cnvtol})}
    \textbf{then} \textbf{break} \\
  \tab[12.]\tab  \textbf{end} \\
  \tab[13.]\tab  Partition $\widetilde U$, $\widetilde V$,
    $\widetilde W$, $\widetilde C$, $\widetilde S$, and $\widetilde R$
    according to \eqref{tcgsvd:eq:part}. \\
  \tab[14.]\tab  Let $U_j = U_k \widetilde U_1$, $V_j =
    V_k \widetilde V_1$, and $W_j = W_k \widetilde W_1$. \\
  \tab[15.]\tab  Let $H_j = \widetilde C_{1} \widetilde R_{11}$
    and $K_j = \widetilde S_{1} \widetilde R_{11}$. \\
  \tab[16.]  \textbf{end}
\end{algorithm}


Algorithm~\ref{tcgsvd:alg:mdgsvd} is a simplified description for the
sake of clarity. For instance, the expansion vectors may be linearly
dependent in practice, and it may be desirable to expand a search space
of dimension $\ell-1$ with only the residual instead of two vectors.
Another missing feature that might be required in practice is deflation,
which is the topic of the next section.



\section{Deflation and the truncated GSVD}
\label{tcgsvd:sec:tgsvd}

Deflation is used in eigenvalue computations to prevent iterative
methods from recomputing known eigenpairs. Since
Algorithm~\ref{tcgsvd:alg:gdgsvd} and Algorithm~\ref{tcgsvd:alg:mdgsvd}
compute generalized singular values and vectors one at a time, deflation
may be necessary for applications where more than one generalized
singular pair is required. The truncated GSVD is an example of such an
application.  There are at least two ways in which generalized singular
values and vectors can be deflated, namely by transformation and by
restriction.  These two approaches have been inspired by their
counterparts for the symmetric eigenvalue problem (cf., e.g.,
\textcite[Ch.~5]{Par98}).  We only describe the two approaches for $m,p
\ge n$ to avoid clutter, but note that they can be adapted to the
general case.

The restriction approach is related to the truncation described in the
previous section and may be used to deflate a single generalized
singular pair at a time. Suppose we wish to deflate the simple pair
$(c_{1},s_{1})$ and let the GSVD of $(A,B)$ be partitioned as
\begin{equation*}
  \begin{split}
    A &= \begin{bmatrix} \vec u_1 & U_2 \end{bmatrix}
      \begin{bmatrix} c_{1} \\ & C_{2} \end{bmatrix}
      \begin{bmatrix} r_{11} & \vec r_{12}^* \\ & R_{22} \end{bmatrix}
      \begin{bmatrix} \vec w_1^* \\ W_2^* \end{bmatrix}, \\
    B &= \begin{bmatrix} \vec v_1 & V_2 \end{bmatrix}
      \begin{bmatrix} s_{1} \\ & S_{2} \end{bmatrix}
      \begin{bmatrix} r_{11} & \vec r_{12}^* \\ & R_{22} \end{bmatrix}
      \begin{bmatrix} \vec w_1^* \\ W_2^* \end{bmatrix},
  \end{split}
\end{equation*}
where $C_{2}$ and $S_{2}$ may be rectangular. Then, with Householder
reflections $P$, $Q$, and $Z$, satisfying
\begin{equation*}
  P\vec u_1 = \vec e_1,
  \qquad
  Q\vec v_1 = \vec e_1,
  \quad\text{and}\quad
  Z\vec w_1 = \vec e_1,
\end{equation*}
it holds that
\begin{equation*}
    PAZ = \begin{bmatrix} c_{1} r_{11} & \times \\
      & \widehat A \end{bmatrix}
    \quad\text{and}\quad
    QBZ = \begin{bmatrix} s_{1} r_{11} & \times \\
      & \widehat B \end{bmatrix},
\end{equation*}
defining $\widehat A$ and $\widehat B$.  At this point, the generalized
singular pairs of $(\widehat A, \widehat B)$ are the generalized
singular pairs of $(A,B)$ other than $(c_1,s_1)$.  Additional
generalized singular pairs can be deflated inductively.

An alternative that allows for the deflation of multiple generalized
singular pairs simultaneously is the restriction approach.
To derive this approach, let the GSVD of $(A,B)$ be partitioned as
\begin{equation}\label{tcgsvd:eq:ABsplit}
  \begin{split}
    A &= \begin{bmatrix} U_1 & U_2 \end{bmatrix}
      \begin{bmatrix} C_{1} \\ & C_{2} \end{bmatrix}
      \begin{bmatrix} R_{11} & R_{12} \\ & R_{22} \end{bmatrix}
      \begin{bmatrix} W_1^* \\ W_2^* \end{bmatrix}, \\
    B &= \begin{bmatrix} V_1 & V_2 \end{bmatrix}
      \begin{bmatrix} S_{1} \\ & S_{2} \end{bmatrix}
      \begin{bmatrix} R_{11} & R_{12} \\ & R_{22} \end{bmatrix}
      \begin{bmatrix} W_1^* \\ W_2^* \end{bmatrix},
  \end{split}
\end{equation}
where $C_{1}$ and $S_{1}$ are square and must be deflated, while $C_{2}$
and $S_{2}$ may be rectangular and must be retained.  Therefore, the
desired generalized singular pairs are deflated by working with the
operators
\begin{equation}\label{tcgsvd:eq:restrictAB}
  \begin{split}
    \widehat A
    &= U_2 C_{2} R_{22} W_2^*
    = U_2 U_2^* A W_2 W_2^*
    = (I - U_1 U_1^*) A (I - W_1 W_1^*), \\
    \widehat B
    &= V_2 S_{2} R_{22} W_2^*
    = V_2 V_2^* B W_2 W_2^*
    = (I - V_1 V_1^*) B (I - W_1 W_1^*),
  \end{split}
\end{equation}
restricted to $\subspace W_2 = \linspan \{ W_2 \}$.  An important
benefit of this approach is that the restriction may be performed
implicitly during the iterations.  For example, if
\eqref{tcgsvd:eq:part} is such that
\begin{equation*}
  U_k \widetilde U_1 = U_1,
  \quad
  V_k \widetilde V_1 = V_1,
  \quad
  W_k \widetilde W_1 = W_1,
  \quad
  \widetilde C_{1} = C_{1},
  \quad
  \widetilde S_{1} = S_{1},
  \quad\text{and}\quad
  \widetilde R_{11} = R_{11};
\end{equation*}
then
%
\begin{equation*}
  \widehat A W_k \widetilde W_2
  = U_k \widetilde U_2 \widetilde C_{2} \widetilde R_{22}
  \quad\text{and}\quad
  \widehat B W_k \widetilde W_2
  = V_k \widetilde V_2 \widetilde S_{2} \widetilde R_{22},
\end{equation*}
where the right-hand sides are available without explicitly working with
$\widehat A$ and $\widehat B$. In addition, if we define the
approximations for the next generalized singular
pair and corresponding vectors as
\begin{equation*}
  \begin{split}
    \alpha &= \vec e_1^* \widetilde C_{2} \vec e_1, &\quad
    \beta &= \vec e_1^* \widetilde S_{2} \vec e_1, &\quad
    \rho &= \vec e_1^* \widetilde R_{22} \vec e_1, \\
    \vec{\widetilde u} &= U_k \widetilde U_2 \vec e_1, &\quad
    \vec{\widetilde v} &= V_k \widetilde V_2 \vec e_1, &\quad
    \vec{\widetilde w} &= W_k \widetilde W_2 \vec e_1,
  \end{split}
\end{equation*}
cf.~Section~\ref{tcgsvd:sec:gdgsvd}, and
\begin{equation*}
  \vec{\widetilde x}
  = \rho^{-1} W_k \widetilde W
  \begin{bmatrix}
    \widetilde R_{11}^{-1} \widetilde R_{12} \vec e_1 \\
    \vec e_1
  \end{bmatrix}
  =
  \rho^{-1} (W_k \widetilde W_1 \widetilde R_{11}^{-1}
    \widetilde R_{12} \vec e_1 + \vec{\widetilde w});
\end{equation*}
then the residual
\begin{equation*}
  \begin{split}
    \vec r = \rho^{-1} (\beta^2 \widehat A^*\!\widehat A
    {} - \alpha^2 \widehat B^*\widehat B) \vec{\widetilde w}
    &= \alpha\beta (\beta \widehat A^* \vec{\widetilde u}
    {} - \alpha \widehat B^* \vec{\widetilde v}) \\
    &= \alpha\beta (\beta A^* \vec{\widetilde u}
    {} - \alpha B^* \vec{\widetilde v})
    = (\beta^2 A^*\!A - \alpha^2 B^*B) \vec{\widetilde x}
  \end{split}
\end{equation*}
and expansion vector(s) can also be computed without $\widehat A$ and
$\widehat B$.


It may be instructive to point out that the restriction approach for
deflation corresponds to a splitting method for general form Tikhonov
regularization described in \autocite{HR10genregu} and references. This
method separates the penalized part of the solution from the unpenalized
part associated with the nullspace of the regularization operator,
essentially deflating specific generalized singular values and vectors.
Consider, for instance, the minimization problem
\begin{equation*}
  \argmin_{\vec x} \|A\vec x - \vec b\|^2 + \mu \|B\vec x\|^2
\end{equation*}
for some $\mu > 0$. Assume for the sake of simplicity that $p \ge n$,
adding zero rows to $B$ if necessary, and suppose that $W_1$ is a basis
for the nullspace of $B$; then we obtain
\begin{equation}\label{tcgsvd:eq:tGSVDsplit}
  \begin{split}
    \|A\vec x - \vec b\|^2 + \mu \|B\vec x\|^2
    &= \|U_1U_1^*AW_1W_1^*\vec x - (U_1U_1^*\vec b - U_1U_1^*AW_2W_2^*\vec x)\|^2 \\
    &\quad + \|U_2U_2^*AW_2W_2^*\vec x - U_2U_2^*\vec b\|^2 + \mu
    \|V_2 V_2^* BW_2W_2^*\vec x\|^2
  \end{split}
\end{equation}
by following the splitting approach and using that $U_2 U_2^* A W_1
W_1^* \vec x = \vec 0$.  Furthermore, with $\vec y_1 = W_1^* \vec x$ and
$\vec y_2 = W_2^*\vec x$, the first part of the right-hand side of
\eqref{tcgsvd:eq:tGSVDsplit} reduces to
\begin{equation*}
    \|(U_1^*AW_1) \vec y_1 - (U_1^*\vec b - U_1^*AW_2\vec y_2)\|^2
    = \|R_{11} \vec y_1 - U_1^*(\vec b - AW_2\vec y_2)\|^2,
\end{equation*}
which vanishes for $\vec y_1 = R_{11}^{-1} U_1^*(\vec b - AW_2\vec
y_2)$.  The remaining part may be written as
\begin{equation*}
  \|\widehat A W_2\vec y_2 - U_2 U_2^* \vec b\|^2
  + \mu \|\widehat B W_2\vec y_2\|^2,
\end{equation*}
where we recognize the deflated matrices from
\eqref{tcgsvd:eq:restrictAB}. A similar expression can be derived for
deflation through restriction, but does not provide additional insight.



\section{Error analysis}
\label{tcgsvd:sec:err}

In this section we are concerned with the quality of the computed
approximations, and develop Rayleigh--Ritz theory that is useful for the
GSVD. In particular, we will generalize several known results for the
$n\times n$ standard Hermitian eigenvalue problem to the Hermitian
positive definite generalized eigenvalue problem
\begin{equation}\label{tcgsvd:eq:gep}
  N \vec x = \lambda M\vec x,
  \qquad
  M > 0,
  \qquad
  M = L^2,
\end{equation}
with eigenvalues $\lambda_1 \ge \lambda_2 \ge \cdots \ge \lambda_n$.
This generalized problem is applicable in our context with $N = A^*\!A$
and $M = X^{-*}X^{-1}$ if we are interested in the largest generalized
singular values, or with $N = B^*B$ and $M = X^{-*}X^{-1}$ if we are
interested in the smallest generalized singular values; and corresponds
to the standard problem
\begin{equation}\label{tcgsvd:eq:ep}
  L^{-1}NL^{-1} \vec y = \lambda \vec y,
  \qquad
  \vec y = L \vec x,
\end{equation}
with the same eigenvalues. Hence, if the subspace $\subspace W$ is a
search space for \eqref{tcgsvd:eq:gep}, then it is natural to consider
$\subspace Z = L\subspace W$ as a search space for \eqref{tcgsvd:eq:ep}
and to associate every approximate generalized eigenvector $\vec w \in
\subspace W$ with an approximate eigenvector $\vec z = L \vec w \in
\subspace Z$.  The corresponding Rayleigh quotients satisfy
\begin{equation}\label{tcgsvd:eq:theta}
  \theta
  = \frac{\vec w^*N\vec w}{\vec w^*M\vec w}
  = \frac{\vec z^*L^{-1}NL^{-1}\vec z}{\vec z^*\vec z}
\end{equation}
and define the approximate eigenvalue $\theta$.

Key to extending results for the generalized problem
\eqref{tcgsvd:eq:ep} to results for the standard problem
\eqref{tcgsvd:eq:gep}, is to introduce generalized sines, cosines, and
tangents, with respect to the $M$-norm defined by $\|\vec x\|_M^2 = \vec
x^*M\vec x = \|L\vec x\|^2$. Generalizations of these trigonometric
functions have previously been considered by \textcite{Spence06}, and
the generalized tangent can also be found in
\autocite[Thm.~15.9.3]{Par98}; however, we believe the treatment and
results presented here to be new.  The regular sine for two nonzero
vectors $\vec y$ and $\vec z$ can be defined as
\begin{equation*}
  \sin(\vec z, \vec y)
  = \frac{\big\|\big(I-\proj{\vec z}\big)\vec y\big\|}{\|\vec y\|}
  = \frac{\big\|\big(I-\proj{\vec y}\big)\vec z\big\|}{\|\vec z\|},
\end{equation*}
where it is easily verified that the above two expressions are equal
indeed.  Substituting $L\vec x$ for $\vec y$ and $L\vec w$ for $\vec z$
yields the $M$-sine defined by
\begin{equation*}
  \sin_M(\vec w, \vec x)
  = \sin(L\vec w, L\vec x)
  = \frac{\big\|\big(I - \projM{\vec w}\big)\vec x\big\|_M}{\|\vec x\|_M}
  = \frac{\big\|\big(I - \projM{\vec x}\big)\vec w\big\|_M}{\|\vec w\|_M}.
\end{equation*}
Again, it may be checked that the above two expressions are equal.  The
regular cosine is given by
\begin{equation*}
  \cos(\vec z, \vec y)
  = \frac{\big\|\proj{\vec z}\vec y\big\|}{\|\vec y\|}
  = \frac{\big\|\proj{\vec y}\vec z\big\|}{\|\vec z\|}
  = \frac{|\vec z^*\vec y|}{\|\vec z\|\,\|\vec y\|},
\end{equation*}
and with the same substitution we find the $M$-cosine
\begin{equation*}
  \cos_M(\vec w, \vec x)
  = \cos(L\vec w, L\vec x)
  = \frac{\big\|\projM{\vec w}\vec x\big\|_M}{\|\vec x\|_M}
  = \frac{\big\|\projM{\vec x}\vec w\big\|_M}{\|\vec w\|_M}
  = \frac{|\vec w^*M\vec x|}{\|\vec w\|_M \|\vec x\|_M}.
\end{equation*}
The $M$-tangent is now naturally defined as $\tan_M(\vec w, \vec x) =
\sin_M(\vec w, \vec x) / \cos_M(\vec w, \vec x)$. We can derive the
$M$-sines, $M$-cosines, and $M$-tangents between subspaces and vectors
with a similar approach. For instance, let $W$ and $LW$ denote bases for
$\subspace W$ and $\subspace Z$, respectively; then
\begin{equation*}
  \sin(\subspace Z,\vec y)
  = \frac{\big\|\big(I - \proj{\vec y}\big)
    Z(Z^*Z)^{-1}Z^*\vec y\big\|}{\|Z(Z^*Z)^{-1}Z^*\vec y\|}
  \quad\text{and}\quad
  \cos(\subspace Z, \vec y)
  = \frac{\|Z(Z^*Z)^{-1}Z^*\vec y\|}{\|\vec y\|},
\end{equation*}
so that
\begin{align*}
  \sin_M(\subspace W, \vec x)
  &= \sin(L\subspace W, L\vec x)
  = \frac{\big\|\big(I - \projM{\vec x}\big)
    W(W^*MW)^{-1}W^*M\vec x\big\|_M}{\|W(W^*MW)^{-1}W^*M\vec x\|_M},
  \\
  \cos_M(\subspace W, \vec x)
  &= \cos(L\subspace W, L\vec x)
  = \frac{\|W(W^*MW)^{-1}W^*M\vec x\|_M}{\|\vec x\|_M},
\end{align*}
and $\tan_M(\subspace W, \vec x) = \sin_M(\subspace W, \vec x) /
\cos_M(\subspace W, \vec x)$.  It is important to note that  $\sin_M$,
$\cos_M$, and $\tan_M$ can all be computed without the matrix square
root $L$ of $M$.

Since our $M$-sines, $M$-cosines, and $M$-tangents equal their regular
counterparts, the extension of several known results for the standard
problem \eqref{tcgsvd:eq:ep} to results for the generalized problem
\eqref{tcgsvd:eq:gep} is immediate. Below is a selection of error
bounds, where we assume that the largest generalized eigenpair
$(\lambda_1,\vec x_1)$ is simple and is approximated by the Ritz pair
$(\theta_1, \vec w_1)$ of \eqref{tcgsvd:eq:gep} with respect to the
search space $\subspace W$.


\begin{proposition}[Generalization of, e.g.,
  {\autocite[Lemma~11.9.2]{Par98}}]
\label{tcgsvd:thm:sinw_lt_dtd2}
  \begin{equation*}
    \sin_M^2(\vec w_1, \vec x_1)
    \le \frac{\lambda_1 - \theta_1}{\lambda_1 - \lambda_2}.
  \end{equation*}
\end{proposition}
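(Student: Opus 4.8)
The plan is to exploit the section's central observation that the $M$-trigonometric quantities coincide with their Euclidean counterparts under the substitution $\vec z = L\vec w$, $\vec y = L\vec x$, thereby reducing the claim to the classical bound for the standard Hermitian eigenvalue problem \eqref{tcgsvd:eq:ep}. Writing $C = L^{-1}NL^{-1}$, which is Hermitian with the same eigenvalues $\lambda_1 \ge \cdots \ge \lambda_n$ and orthonormal eigenvectors $\vec q_1, \dots, \vec q_n$, I would first take $\vec q_1 = L\vec x_1 / \|L\vec x_1\|$ and set $\vec z_1 = L\vec w_1$. By the definition of $\sin_M$ we then have $\sin_M(\vec w_1, \vec x_1) = \sin(L\vec w_1, L\vec x_1) = \sin(\vec z_1, \vec q_1)$, and by \eqref{tcgsvd:eq:theta} the Rayleigh quotient is preserved, $\theta_1 = (\vec z_1^* C \vec z_1)/(\vec z_1^*\vec z_1)$. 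Thus it suffices to establish $\sin^2(\vec z_1, \vec q_1) \le (\lambda_1 - \theta_1)/(\lambda_1 - \lambda_2)$ in the purely Euclidean setting.

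For this Euclidean statement I would normalize $\vec z_1$ and expand it in the orthonormal eigenbasis as $\vec z_1 = \sum_i \gamma_i \vec q_i$ with $\sum_i |\gamma_i|^2 = 1$. Then $\theta_1 = \sum_i \lambda_i |\gamma_i|^2$ while $\sin^2(\vec z_1, \vec q_1) = 1 - |\gamma_1|^2 = \sum_{i \ge 2}|\gamma_i|^2$. The crux is the elementary identity
\begin{equation*}
  \lambda_1 - \theta_1 = \sum_{i \ge 2} (\lambda_1 - \lambda_i)\,|\gamma_i|^2,
\end{equation*}
after which the ordering $\lambda_1 - \lambda_i \ge \lambda_1 - \lambda_2 > 0$ for every $i \ge 2$ gives $\lambda_1 - \theta_1 \ge (\lambda_1 - \lambda_2)\sin^2(\vec z_1, \vec q_1)$. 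Dividing by $\lambda_1 - \lambda_2$ yields the proposition.

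I expect no genuine obstacle here: the substance of the result lives entirely in the reduction machinery already developed in this section, and once the $M$-sine is rewritten as an ordinary sine the argument is the textbook eigenvalue/eigenvector estimate. The only points needing a little care are bookkeeping ones — that $\vec q_1$ may indeed be taken parallel to $L\vec x_1$ so that $\sin(\vec z_1, \vec q_1)$ is the quantity of interest, that $\theta_1 \le \lambda_1$ so the right-hand side is nonnegative, and that simplicity of $(\lambda_1,\vec x_1)$ forces $\lambda_1 - \lambda_2 > 0$ so the fraction is well defined. I would also note that the estimate uses no optimality property of the Ritz pair beyond $\theta_1$ being the Rayleigh quotient of $\vec w_1$, which is precisely why it transfers verbatim from the standard problem \eqref{tcgsvd:eq:ep} to the generalized problem \eqref{tcgsvd:eq:gep}.
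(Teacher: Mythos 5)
Your proposal is correct and takes essentially the same route as the paper: the paper's entire argument is that since $\sin_M(\vec w_1,\vec x_1)=\sin(L\vec w_1,L\vec x_1)$ and the Rayleigh quotient in \eqref{tcgsvd:eq:theta} is preserved under $\vec z = L\vec w$, the bound is ``immediate'' from the known result for the standard problem \eqref{tcgsvd:eq:ep}, which is exactly your reduction. The only difference is that you additionally write out the textbook eigenbasis-expansion proof of the Euclidean bound, where the paper simply cites Parlett's Lemma~11.9.2.
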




\begin{proposition}[Generalization of {\autocite[Thm.~2.1]{SE02bnds}}]
\label{tcgsvd:thm:dt_lt_dn_sinW}
  \begin{equation*}
    \lambda_1 - \theta_1
    \le (\lambda_1 - \lambda_n) \, \sin_M^2(\subspace W, \vec x_1).
  \end{equation*}
\end{proposition}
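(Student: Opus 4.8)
The plan is to reduce the generalized problem \eqref{tcgsvd:eq:gep} to the equivalent standard Hermitian eigenproblem \eqref{tcgsvd:eq:ep} and to invoke the corresponding known bound \autocite[Thm.~2.1]{SE02bnds} there. Writing $C = L^{-1}NL^{-1}$, $\vec y_1 = L\vec x_1$, and $\subspace Z = L\subspace W$, the matrix $C$ is Hermitian with the same eigenvalues $\lambda_1 \ge \cdots \ge \lambda_n$ as \eqref{tcgsvd:eq:gep}, and $\vec y_1$ is its dominant eigenvector. By \eqref{tcgsvd:eq:theta} the $M$-Rayleigh quotient of $\vec w \in \subspace W$ equals the ordinary Rayleigh quotient of $\vec z = L\vec w \in \subspace Z$, so the Ritz value $\theta_1$ for \eqref{tcgsvd:eq:gep} coincides with the standard Ritz value of $C$ on $\subspace Z$; by construction of the $M$-trigonometric functions, $\sin_M(\subspace W, \vec x_1) = \sin(\subspace Z, \vec y_1)$. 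Thus the desired inequality is exactly \autocite[Thm.~2.1]{SE02bnds} applied to $C$, $\subspace Z$, and $\vec y_1$, and transported back through $L$.

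For a self-contained argument I would reprove the standard bound as follows. Let $\vec p \in \subspace Z$ be the normalized orthogonal projection of $\vec y_1$ onto $\subspace Z$, so that $\|\vec p\| = 1$ and $\sin(\vec y_1, \vec p) = \sin(\subspace Z, \vec y_1)$. Since $\theta_1$ is the maximal Rayleigh quotient over $\subspace Z$, I have $\theta_1 \ge \vec p^* C \vec p$, and hence $\lambda_1 - \theta_1 \le \lambda_1 - \vec p^* C \vec p$. Expanding $\vec p$ in an orthonormal eigenbasis $\vec y_1, \dots, \vec y_n$ of $C$ yields
\begin{equation*}
  \lambda_1 - \vec p^* C \vec p
  = \sum_{j=1}^{n} |\vec y_j^* \vec p|^2 (\lambda_1 - \lambda_j)
  \le (\lambda_1 - \lambda_n) \sum_{j=2}^{n} |\vec y_j^* \vec p|^2
  = (\lambda_1 - \lambda_n)\big(1 - |\vec y_1^* \vec p|^2\big),
\end{equation*}
and since $1 - |\vec y_1^* \vec p|^2 = \sin^2(\vec y_1, \vec p)$, the claim follows after transporting back through $L$.

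The reduction itself is routine once the trigonometric machinery of this section is in place; the only point requiring genuine care is the choice of test vector, and this is the step I would treat most carefully. Using the \emph{unnormalized} projection of $\vec y_1$ produces the weaker factor $\tan_M^2(\subspace W, \vec x_1)$, because the Rayleigh-quotient denominator $\|\vec g\|^2$ contributes a spurious factor $\cos^{-2}$. The sharper $\sin^2$ bound is obtained precisely by testing with the \emph{normalized} $\vec p$ and bounding $\lambda_1 - \vec p^* C \vec p$ directly via the eigenvector expansion, where the $j=1$ term vanishes and the remaining weights sum to $\sin^2(\vec y_1, \vec p)$. Everything else is bookkeeping guaranteed by the identities $\theta$ in \eqref{tcgsvd:eq:theta} and $\sin_M(\subspace W, \vec x_1) = \sin(\subspace Z, \vec y_1)$.
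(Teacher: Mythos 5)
Your proof is correct and takes the paper's route exactly: the paper presents this bound as an immediate transfer of \autocite[Thm.~2.1]{SE02bnds} through the substitution $\vec z = L\vec w$, using that the quantity $\theta$ in \eqref{tcgsvd:eq:theta} and $\sin_M$ coincide with the ordinary Rayleigh quotient and sine of the transformed vectors, which is precisely your first paragraph (your second paragraph merely reproves the cited standard bound, and does so correctly via the eigenbasis expansion of the normalized projection). One minor quibble: your closing caution about normalized versus unnormalized test vectors is moot, since the Rayleigh quotient is scale-invariant---testing with the unnormalized projection $\vec g$ of $\vec y_1$ also yields the $\sin^2$ factor, because $\vec y_1^*\vec g = \|\vec g\|^2$ makes the offending denominator cancel.
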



\noindent
The two propositions imply that $\vec w_1 \to \vec x_1$ when $\theta_1
\to \lambda_1$, with $\theta_1$ tending to $\lambda_1$ when
$\sin(\subspace W,\vec x_1) \to 0$. The next corollary is a
straightforward consequence.


\begin{corollary}[Generalization of {\autocite[Thm.~2.1]{SE02bnds}}]
\label{tcgsvd:thm:sinw_lt_dnd2_sinW}
  \begin{equation*}
    \sin_M^2(\vec w_1, \vec x_1)
    \le \frac{\lambda_1 - \lambda_n}{\lambda_1 - \lambda_2}\,
      \sin_M^2(\subspace W, \vec x_1)
    = \left(
      1 + \frac{\lambda_2 - \lambda_n}{\lambda_1 - \lambda_2}
    \right) \sin_M^2(\subspace W, \vec x_1).
  \end{equation*}
\end{corollary}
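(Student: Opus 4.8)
The plan is to chain the two preceding results directly, since both bounds share the factor $\lambda_1 - \theta_1$ and the common denominator $\lambda_1 - \lambda_2$. First I would invoke Proposition~\ref{tcgsvd:thm:sinw_lt_dtd2}, which controls $\sin_M^2(\vec w_1, \vec x_1)$ from above by $(\lambda_1 - \theta_1)/(\lambda_1 - \lambda_2)$. The simplicity of the largest eigenpair $(\lambda_1, \vec x_1)$ guarantees $\lambda_1 > \lambda_2$, so the denominator is strictly positive and the bound is meaningful; this same positivity is what allows me to substitute an upper bound for $\lambda_1 - \theta_1$ into the numerator without reversing the inequality.

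Next I would feed in Proposition~\ref{tcgsvd:thm:dt_lt_dn_sinW}, which bounds $\lambda_1 - \theta_1$ above by $(\lambda_1 - \lambda_n)\,\sin_M^2(\subspace W, \vec x_1)$. Dividing this through by the positive quantity $\lambda_1 - \lambda_2$ and combining it transitively with the first inequality yields
\[
  \sin_M^2(\vec w_1, \vec x_1)
  \le \frac{\lambda_1 - \theta_1}{\lambda_1 - \lambda_2}
  \le \frac{\lambda_1 - \lambda_n}{\lambda_1 - \lambda_2}\,
    \sin_M^2(\subspace W, \vec x_1),
\]
which is precisely the claimed inequality.

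Finally, the stated equality is a purely algebraic rewriting of the prefactor: splitting $\lambda_1 - \lambda_n = (\lambda_1 - \lambda_2) + (\lambda_2 - \lambda_n)$ and dividing by $\lambda_1 - \lambda_2$ gives $1 + (\lambda_2 - \lambda_n)/(\lambda_1 - \lambda_2)$. There is no genuine obstacle here: the entire argument is a transitive chaining of two already-established inequalities together with one elementary identity, and the only point requiring any care is confirming that $\lambda_1 - \lambda_2 > 0$ (which follows from the simplicity assumption) so that the division in the first proposition preserves the direction of the inequality throughout.
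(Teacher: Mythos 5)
Your proof is correct and matches the paper's intent exactly: the paper states this corollary as a ``straightforward consequence'' of Propositions~\ref{tcgsvd:thm:sinw_lt_dtd2} and \ref{tcgsvd:thm:dt_lt_dn_sinW}, which is precisely the transitive chaining you carry out, together with the elementary splitting $\lambda_1 - \lambda_n = (\lambda_1 - \lambda_2) + (\lambda_2 - \lambda_n)$. Your explicit attention to the positivity of $\lambda_1 - \lambda_2$ (from simplicity of the top eigenpair) is a fine addition but not a departure from the paper's argument.
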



\noindent
As a result of Corollary~\ref{tcgsvd:thm:sinw_lt_dnd2_sinW}, we can
expect $\sin_M(\vec w_1,\vec x_1)$ to be close to $\sin_M(\subspace W,
\vec x_1)$ if the eigenvalue $\lambda_1$ is well separated from the rest
of the spectrum. A sharper bound can be obtained by generalizing the
optimal bound from \textcite{SE02bnds}.


\begin{proposition}[Generalization of {\autocite[Thm.~3.2]{SE02bnds}}]
\label{tcgsvd:thm:sharp}
  Let $(\theta_j,\vec w_j)$ denote the Ritz pairs of the generalized
  problem \eqref{tcgsvd:eq:gep} with respect to $\subspace W$, and
  define
  \begin{equation*}
    \delta_{\subspace W} = \min\,\sin_M(\vec w_j, \vec x_1)
  \end{equation*}
  as the smallest of all $M$-sines between the Ritz vectors $\vec w_j$
  and the generalized eigenvector $\vec x_1$. Furthermore, define for
  any $\epsilon > 0$ the maximum
  \begin{equation*}
    \delta_k(\epsilon) = \max_{\subspace W}\, \{
      \delta_{\subspace W} \mid \dim(\subspace W) = k,
      \sin_M(\subspace W, \vec x_1) \le \epsilon
    \}.
  \end{equation*}
  If $(\theta_{\subspace W},\vec w_{\subspace W})$ is the Ritz pair for
  which $\delta_{\subspace W}$ is realized and $0 \le \epsilon <
  (\lambda_1 - \lambda_2) / (\lambda_1 - \lambda_n)$, then
  $\theta_{\subspace W} = \theta_1 > \lambda_2$ and
  \begin{equation*}
    \delta_k^2(\epsilon) = \frac{1}{2}(1 + \epsilon^2)
    {} - \frac{1}{2} \sqrt{(1 - \epsilon^2)^2 - \kappa\epsilon^2}
    \quad\text{with}\quad
    \kappa = \frac{(\lambda_2 - \lambda_n)^2}{(\lambda_1 -
    \lambda_n)(\lambda_1 - \lambda_2)},
  \end{equation*}
  for all $k \in \{2, \dots, n-1\}$.
\end{proposition}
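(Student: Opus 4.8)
The plan is to transfer the sharp bound directly from the standard Hermitian eigenvalue problem \eqref{tcgsvd:eq:ep} to the generalized problem \eqref{tcgsvd:eq:gep}, exploiting the fact, established above, that the $M$-trigonometric functions coincide with their ordinary counterparts under the substitution $\vec y = L\vec x$ and $\subspace Z = L\subspace W$, where $M = L^2$. Since the original result of \textcite{SE02bnds} already supplies the optimal constant $\delta_k^2(\epsilon)$ for the standard problem, the entire argument reduces to verifying that every object appearing in the statement---the Ritz pairs, the angle constraint, and the maximization over subspaces---corresponds correctly across the transformation.

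First I would confirm the Ritz-pair correspondence. Writing $\vec w = W\vec c$, the Galerkin condition for \eqref{tcgsvd:eq:gep} is $W^*NW\vec c = \theta\, W^*MW\vec c$; setting $Z = LW$ and $\vec z = L\vec w = Z\vec c$, the Galerkin condition for \eqref{tcgsvd:eq:ep} reads $Z^*L^{-1}NL^{-1}Z\vec c = \theta\, Z^*Z\vec c$, which upon substituting $Z = LW$ (with $L$ Hermitian) collapses to the identical reduced pencil $W^*NW\vec c = \theta\, W^*MW\vec c$. Hence the Ritz values $\theta_j$ are shared by both problems and the Ritz vectors are related by $\vec z_j = L\vec w_j$, while the eigenvector of \eqref{tcgsvd:eq:ep} associated with $\lambda_1$ is $\vec y_1 = L\vec x_1$. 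This is exactly the compatibility already foreshadowed by \eqref{tcgsvd:eq:theta}.

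Next I would translate the quantities in the statement. By the equalities $\sin_M(\vec w_j, \vec x_1) = \sin(\vec z_j, \vec y_1)$ and $\sin_M(\subspace W, \vec x_1) = \sin(\subspace Z, \vec y_1)$, the quantity $\delta_{\subspace W}$ coincides with its standard-problem analogue over $\subspace Z$. Because $L$ is invertible, the map $\subspace W \mapsto L\subspace W$ is a bijection between $k$-dimensional subspaces that preserves the constraint $\sin_M(\subspace W, \vec x_1) \le \epsilon$, so the maximum $\delta_k(\epsilon)$ is likewise unchanged. As the two problems share the eigenvalues $\lambda_1 \ge \cdots \ge \lambda_n$ (cf.\ the remark following \eqref{tcgsvd:eq:ep}), the hypothesis $0 \le \epsilon < (\lambda_1 - \lambda_2)/(\lambda_1 - \lambda_n)$, the conclusion $\theta_{\subspace W} = \theta_1 > \lambda_2$, and the closed form for $\delta_k^2(\epsilon)$ all carry over verbatim from \textcite[Thm.~3.2]{SE02bnds}.

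The computation contains no genuine analytic difficulty, since the variational optimization that produces the constant $\delta_k^2(\epsilon)$ is outsourced entirely to the cited theorem. The only point requiring care is the bookkeeping of the transformation: one must check that the Rayleigh--Ritz procedure, the angle constraint, and the subspace maximization are all faithfully preserved by $L$, and in particular that $\subspace W \mapsto L\subspace W$ correctly identifies the two families of admissible search spaces (and does not silently alter the dimension condition $k \in \{2, \dots, n-1\}$). This verification is the step I expect to be the main, albeit routine, obstacle.
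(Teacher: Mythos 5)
Your proposal is correct and takes essentially the same route as the paper: the paper offers no separate proof of this proposition, relying instead on the preceding observation that the $M$-sines, $M$-cosines, and $M$-tangents coincide with their ordinary counterparts under $\vec y = L\vec x$ and $\subspace Z = L\subspace W$, so that the sharp bound of \textcite[Thm.~3.2]{SE02bnds} transfers verbatim. Your explicit checks---the Ritz-pair correspondence via the identical reduced pencil and the bijection of $k$-dimensional subspaces preserving the angle constraint---are precisely the bookkeeping that the paper declares ``immediate.''
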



\noindent
The quantity $\delta_k^2(\epsilon)$ is not particularly elegant, but is
sharp and can be used to obtain the following upper bound, which is
sharper than the bound in Corollary~\ref{tcgsvd:thm:sinw_lt_dnd2_sinW}.


\begin{corollary}[Generalization of {\autocite[Cor.~3.3]{SE02bnds}}]
  If the conditions in Proposition~\ref{tcgsvd:thm:sharp} are satisfied,
  then
  \begin{equation*}
    \sin_M^2(\vec w_1, \vec x_1)
    \le \sin_M^2(\subspace W, \vec x_1)
    {} + \frac{\kappa}{2} \tan_M^2(\subspace W, \vec x_1).
  \end{equation*}
\end{corollary}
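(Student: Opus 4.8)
The plan is to derive the corollary directly from the sharp estimate of Proposition~\ref{tcgsvd:thm:sharp}, reducing everything to one elementary algebraic inequality. First I would set $\epsilon = \sin_M(\subspace W, \vec x_1)$, so that the conditions of Proposition~\ref{tcgsvd:thm:sharp} hold with this $\epsilon$. That proposition guarantees that the Ritz pair realizing $\delta_{\subspace W}$ is the one with Ritz value $\theta_{\subspace W} = \theta_1$, whose Ritz vector is precisely $\vec w_1$; hence $\delta_{\subspace W} = \sin_M(\vec w_1, \vec x_1)$. Since our subspace $\subspace W$ is admissible for the maximization defining $\delta_k(\epsilon)$ (its $M$-sine to $\vec x_1$ is exactly $\epsilon$), it follows that $\sin_M^2(\vec w_1, \vec x_1) \le \delta_k^2(\epsilon)$, so it suffices to bound $\delta_k^2(\epsilon)$ by the right-hand side of the corollary.

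Next I would rewrite that right-hand side purely in terms of $\epsilon$. Because $\sin_M$ and $\cos_M$ between a subspace and a vector coincide with their ordinary counterparts for $L\subspace W$ and $L\vec x_1$, they satisfy the Pythagorean identity $\sin_M^2(\subspace W, \vec x_1) + \cos_M^2(\subspace W, \vec x_1) = 1$, and therefore $\tan_M^2(\subspace W, \vec x_1) = \epsilon^2/(1-\epsilon^2)$. The target bound thus reads $\delta_k^2(\epsilon) \le \epsilon^2 + \tfrac{\kappa}{2}\,\epsilon^2/(1-\epsilon^2)$.

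It then remains to verify this inequality using the closed form for $\delta_k^2(\epsilon)$. Writing $a = \epsilon^2$ and $D = (1-a)^2 - \kappa a$ (which is nonnegative, as otherwise $\delta_k^2(\epsilon)$ would not be well defined), I would move the square-root term to one side and collect the remaining terms over the common denominator $1-a > 0$. A short computation shows the non-radical side collapses to $D/\big(2(1-a)\big)$, so the claim becomes $D/(1-a) \le \sqrt{D}$. For $D = 0$ both sides vanish; for $D > 0$ this is equivalent to $\sqrt{D} \le 1-a$, i.e. $D \le (1-a)^2$, which holds at once since $D = (1-a)^2 - \kappa a$ and $\kappa a \ge 0$.

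The only genuinely delicate step is the first one: correctly invoking Proposition~\ref{tcgsvd:thm:sharp} to identify the minimizing Ritz pair with $(\theta_1, \vec w_1)$, which is what licenses the inequality $\sin_M^2(\vec w_1, \vec x_1) \le \delta_k^2(\epsilon)$. Once that identification is in place, the remainder is routine and hinges only on $\kappa a \ge 0$ together with $0 \le a < 1$.
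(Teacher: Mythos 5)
Your proposal is correct and follows exactly the route the paper intends: the corollary is obtained from Proposition~\ref{tcgsvd:thm:sharp} by bounding $\sin_M^2(\vec w_1,\vec x_1)\le\delta_k^2(\epsilon)$ with $\epsilon=\sin_M(\subspace W,\vec x_1)$ (using that the minimizing Ritz pair is $(\theta_1,\vec w_1)$) and then verifying algebraically that $\delta_k^2(\epsilon)\le\epsilon^2+\tfrac{\kappa}{2}\,\epsilon^2/(1-\epsilon^2)$, which is precisely the argument behind the cited result of \textcite{SE02bnds}. Your reduction to $D/(1-a)\le\sqrt{D}$ with $D=(1-a)^2-\kappa a\ge 0$ and $0\le a<1$ is sound.
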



Now that we have extended a number of results for the standard problem
\eqref{tcgsvd:eq:ep} to the generalized problem \eqref{tcgsvd:eq:gep},
it may be worthwhile to bound the generalized sine $\sin_M$ in terms
of the standard sine.


\begin{proposition}
  Let $\kappa = \kappa(M)$ be the condition number of $M$, then
  \[
    \frac{1}{\kappa}  \sin^2(\vec w, \vec x)
    \le \sin_M^2(\vec w, \vec x)
    \le \frac{1}{4} (\kappa+1)^2 \sin^2(\vec w, \vec x).
  \]
\end{proposition}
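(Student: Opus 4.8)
The plan is to exploit the identity $\sin_M(\vec w, \vec x) = \sin(L\vec w, L\vec x)$ established above, which turns the claim into a purely Euclidean question: how much can an invertible linear map distort the ordinary angle between two vectors? I would isolate this as a small lemma: for any invertible $T$ and nonzero $\vec a, \vec b$,
\[
  \sin(T\vec a, T\vec b) \le \kappa(T)\, \sin(\vec a, \vec b),
\]
where $\kappa(T) = \|T\|\,\|T^{-1}\|$ is the spectral condition number. Granting this, I apply it twice: once with $T = L$ to bound $\sin_M$ from above by $\sin$, and once with $T = L^{-1}$ acting on $L\vec w$ and $L\vec x$ to bound $\sin$ from above by $\sin_M$, which gives the lower bound. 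Since $M = L^2$ with $L$ Hermitian positive definite, the eigenvalues of $L$ are the square roots of those of $M$, so $\kappa(L) = \kappa(L^{-1}) = \sqrt{\kappa(M)} = \sqrt{\kappa}$, and both estimates carry the same constant $\sqrt{\kappa}$.

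For the lemma itself, I would write $\vec b = \proj{\vec a}\,\vec b + \vec b_\perp$ with $\vec b_\perp = (I - \proj{\vec a})\vec b$, so that $\vec b_\perp \perp \vec a$ and $\sin(\vec a, \vec b) = \|\vec b_\perp\|/\|\vec b\|$. Writing $P$ for the orthogonal projector onto $\mathrm{span}(T\vec a)$ and noting $(I - P)\,T\vec a = \vec 0$, the component of $T\vec b$ orthogonal to $T\vec a$ stems entirely from $T\vec b_\perp$, so that
\[
  \sin(T\vec a, T\vec b)
  = \frac{\|(I - P)\,T\vec b_\perp\|}{\|T\vec b\|}
  \le \frac{\|T\vec b_\perp\|}{\|T\vec b\|}
  \le \frac{\|T\|\,\|\vec b_\perp\|}{\sigma_{\min}(T)\,\|\vec b\|}
  = \kappa(T)\,\sin(\vec a, \vec b),
\]
using that orthogonal projectors do not increase the norm, that $\|T\vec b_\perp\| \le \|T\|\,\|\vec b_\perp\|$, and that $\|T\vec b\| \ge \sigma_{\min}(T)\,\|\vec b\|$.

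Combining the two applications yields $\tfrac{1}{\sqrt\kappa}\sin(\vec w, \vec x) \le \sin_M(\vec w, \vec x) \le \sqrt\kappa\,\sin(\vec w, \vec x)$, and squaring gives $\tfrac{1}{\kappa}\sin^2(\vec w, \vec x) \le \sin_M^2(\vec w, \vec x) \le \kappa\,\sin^2(\vec w, \vec x)$. The lower bound here is exactly the one claimed. For the upper bound, the inequality $\kappa \le \tfrac14(\kappa+1)^2$, equivalent to $(\kappa - 1)^2 \ge 0$, immediately relaxes $\kappa\,\sin^2$ to the stated $\tfrac14(\kappa+1)^2\sin^2$.

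I expect the step requiring the most care to be the two-sided bookkeeping rather than a genuine analytic obstacle: one must track the square root $\kappa(L) = \sqrt{\kappa(M)}$ and apply the lemma in the correct direction to $L^{-1}$ so that the constants land as $\sqrt\kappa$ on each side. It is worth remarking that the natural constant produced by this argument is $\kappa$, which is sharper than (and implies) the stated $\tfrac14(\kappa+1)^2$; the same sharp two-sided estimate can alternatively be read off by compressing $M$ to the two-dimensional space $\mathrm{span}\{\vec w,\vec x\}$ and observing, via $\det(F^*\widetilde M F) = |\det F|^2\det\widetilde M$ for $F = [\vec w\ \vec x]$, that $\sin_M^2/\sin^2$ equals $\det\widetilde M$ divided by the product of two Rayleigh quotients of the compression $\widetilde M$, each of which lies between the extreme eigenvalues of $M$.
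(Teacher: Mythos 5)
Your proposal is correct, and it takes a genuinely different route from the paper. The paper stays in the original coordinates: for the lower bound it peels off the Euclidean projector and uses the norm equivalence $\lambda_{\min}(M)\|\cdot\|^2 \le \|\cdot\|_M^2 \le \lambda_{\max}(M)\|\cdot\|^2$, and for the upper bound it writes $\sin_M$ in terms of the \emph{oblique} ($M$-orthogonal) projector $I - \vec w\vec w^*M/(\vec w^*M\vec w)$, bounds that projector's spectral norm by the reciprocal of the first anti-eigenvalue of $M$, and invokes the Kantorovich inequality; this is precisely where the constant $\tfrac14(\kappa+1)^2$ originates. You instead push everything through the identity $\sin_M(\vec w,\vec x) = \sin(L\vec w, L\vec x)$ and a self-contained angle-distortion lemma, $\sin(T\vec a, T\vec b) \le \kappa(T)\sin(\vec a,\vec b)$ for invertible $T$, applied once with $T=L$ and once with $T=L^{-1}$; since $\kappa(L) = \kappa(L^{-1}) = \sqrt{\kappa}$, this yields the symmetric two-sided bound $\tfrac1\kappa \sin^2 \le \sin_M^2 \le \kappa\sin^2$. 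Your lemma proof (splitting $\vec b$ into components parallel and orthogonal to $\vec a$, noting the parallel part maps into $\operatorname{span}(T\vec a)$, and using $\|T\vec b_\perp\| \le \|T\|\|\vec b_\perp\|$ together with $\|T\vec b\| \ge \sigma_{\min}(T)\|\vec b\|$) is sound. What each approach buys: the paper's argument works directly with the $M$-orthogonal projector that appears throughout its Rayleigh--Ritz analysis, but pays for it with the Kantorovich machinery and a weaker constant; your argument is more elementary (no anti-eigenvalues, no oblique-projector norm identity) and delivers the sharper upper constant $\kappa$, which implies the stated bound via $\kappa \le \tfrac14(\kappa+1)^2$, i.e., $(\kappa-1)^2 \ge 0$, and which your concluding determinant/compression remark correctly identifies as the best possible constant of this form.
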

\begin{proof}
  Without loss of generality we assume $\|\vec w\| = \|\vec x\| =
  1$, so that
  \[
    \lambda_{\min}(M) \le \|\vec x\|_M^2 \le \lambda_{\max}(M).
  \]
  The first inequality follows from
  \begin{equation*}
    \begin{split}
      \sin^2(\vec w, \vec x)
      &= \big\|(I - \vec w \vec w^*)
        \big(I - \projM{\vec w}\big)\vec x\big\|^2 \\
      &\le \|\vec x\|_M^2 \, \frac{\big\|\big(I - \projM{\vec w}\big)\vec x\big\|^2}{%
        \big\|\big(I - \projM{\vec w}\big)\vec x\big\|_M^2} \, \sin_M^2(\vec w, \vec x)
      \le \frac{\lambda_{\max}(M)}{\lambda_{\min}(M)} \, \sin_M^2(\vec w, \vec x).
    \end{split}
  \end{equation*}
  For the second inequality, it follows from, e.g., \autocite{DS06proj}
  that
  \[
    \left\|I - \frac{\vec w\vec w^*M}{\vec w^*M\vec w}\right\|
    = \left\|\frac{\vec w\vec w^*M}{\vec w^*M\vec w}\right\|
    = \frac{\|M\vec w\|}{\vec w^*M\vec w}
    = \cos^{-1}(\vec w, M \vec w) \le \mu^{-1},
  \]
  where $\mu^{-1}$ is the inverse of the first anti-eigenvalue
  \autocite[Ch.~3.6]{GR97nr}
  \[
    \mu = \min_{\|\vec w\|=1} \frac{\vec w^*M\vec w}{\|M\vec w\|}.
  \]
  By applying Kantorovich' inequality we find \autocite[p.~68]{GR97nr}
  \[
    \mu^{-1} = \frac{1}{2} \, \frac{\lambda_{\min}(M) + \lambda_{\max}(M)}
    {\sqrt{\lambda_{\min}(M) \, \lambda_{\max}(M)}}
    = \frac{1}{2}\, \frac{\kappa+1}{\sqrt{\kappa}}.
  \]
  Finally, by combining the above and using
  \[
    \left(I - \frac{\vec w\vec w^*M}{\vec w^*M\vec w}\right)
    = \left(I - \frac{\vec w\vec w^*M}{\vec w^*M\vec w}\right)
    (I - \vec w\vec w^*)\vec x,
  \]
  we see that
  \begin{equation*}
    \begin{split}
      \sin_M^2(\vec w, \vec x)
      & = \frac{\big\|\big(I - \projM{\vec x}\big)\vec x\big\|_M^2}{\|\vec x\|_M^2}
      \le \frac{\lambda_{\max}(M)}{\lambda_{\min}(M)} \,
      \big\|\big(I - \projM{\vec w}\big)\vec x\big\|^2 \\
      &\le \kappa \, \big\|I - \projM{\vec w}\big\|^2 \, \|(I - \vec w\vec w^*)\vec x\|^2
      \le \frac{1}{4} (\kappa+1)^2 \sin^2(\vec w, \vec x),
    \end{split}
  \end{equation*}
  which concludes the proof.
\end{proof}


An interesting observation about $\sin_M$ in the context of the GSVD is
that $\|\vec f\|$ from Proposition~\ref{tcgsvd:thm:asymconv} equals
$\sin_M(\vec{\widetilde x}_1,\vec x_1)$ if $M = A^*\!A + B^*B =
X^{-*}X^{-1}$. Furthermore, it has been shown in the proof of
Proposition~\ref{tcgsvd:thm:asymconv} that the error in $\widetilde
c_1^2 = \|A\vec{\widetilde x}_1\|^2$ and $\widetilde s_1^2 =
\|B\vec{\widetilde x}_1\|^2$ is quadratic in $\|\vec f\|$. An
alternative is to express the approximation error in terms of the
residual. We have, for example, the following straightforward
Bauer--Fike-type result.


\begin{proposition}[Bauer--Fike for the GSVD]
\label{tcgsvd:thm:bf}
  Let $(\widetilde c, \widetilde s)$ be an approximate generalized
  singular pair with corresponding generalized singular vector
  $\vec{\widetilde x}$ and residual
  \begin{equation*}
    \vec r = (\widetilde s^2 A^*\!A - \widetilde c^2 B^*B)
    \vec{\widetilde x};
  \end{equation*}
  then there exists a generalized singular pair $(c_\star, s_\star)$ of
  $(A,B)$ such that
  \begin{equation*}
    |\widetilde s^2 c_\star^2 - \widetilde c^2 s_\star^2|
    \le \|X\|^2 \frac{\|\vec r\|}{\|\vec{\widetilde x}\|}.
  \end{equation*}
\end{proposition}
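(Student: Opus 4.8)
The plan is to run a Bauer--Fike argument in the coordinate system furnished by the diagonal-form GSVD \eqref{tcgsvd:eq:gsvdDiag}. First I would use $AX = U\Sigma_A$ and $BX = V\Sigma_B$ to write $A = U\Sigma_A X^{-1}$ and $B = V\Sigma_B X^{-1}$, so that, since $U$ and $V$ are unitary,
\[
  A^*\!A = X^{-*}\diag(c_1^2,\dots,c_n^2)X^{-1}
  \quad\text{and}\quad
  B^*B = X^{-*}\diag(s_1^2,\dots,s_n^2)X^{-1}.
\]
Substituting these into the residual simultaneously diagonalizes the operator with respect to $X^{-1}$: writing $D = \diag(d_1,\dots,d_n)$ with $d_j = \widetilde s^2 c_j^2 - \widetilde c^2 s_j^2$, I obtain $\vec r = X^{-*} D X^{-1} \vec{\widetilde x}$.

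If $D$ is singular, then $d_\star = 0$ for some index $\star$, and the claimed inequality holds trivially since its left-hand side vanishes. Otherwise $D$ is invertible, and I would rearrange $\vec r = X^{-*} D X^{-1} \vec{\widetilde x}$ into $\vec{\widetilde x} = X D^{-1} X^* \vec r$. Taking norms and using $\|X^*\| = \|X\|$ for the spectral norm gives
\[
  \|\vec{\widetilde x}\|
  \le \|X\|\,\|D^{-1}\|\,\|X^*\|\,\|\vec r\|
  = \|X\|^2\,\|D^{-1}\|\,\|\vec r\|.
\]
Because $D$ is real and diagonal, $\|D^{-1}\| = 1/\min_j |d_j|$, so selecting $(c_\star,s_\star)$ to be the pair realizing this minimum yields $|d_\star| = \min_j |d_j|$, and rearranging the displayed inequality produces the bound $|d_\star| \le \|X\|^2\,\|\vec r\|/\|\vec{\widetilde x}\|$, which is exactly the assertion.

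There is no serious obstacle here; the argument is essentially the textbook Bauer--Fike proof adapted to the pencil $(A^*\!A, B^*B)$. The only points that require a little care are the simultaneous diagonalization of $A^*\!A$ and $B^*B$ by the single matrix $X$, which is precisely what the diagonal form \eqref{tcgsvd:eq:gsvdDiag} provides; the harmless degenerate case in which $D$ is singular; and keeping track of the fact that the two factors of $X$ appearing in $X D^{-1} X^*$ each contribute a factor $\|X\|$ rather than $\|X\|\,\|X^{-1}\|$, which is why the final constant is $\|X\|^2$ and not the condition number of $X$.
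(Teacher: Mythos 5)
Your proof is correct and takes essentially the same route as the paper's: both use the diagonal-form GSVD to write $\widetilde s^2 A^*\!A - \widetilde c^2 B^*B = X^{-*} D X^{-1}$ with $D$ diagonal with entries $\widetilde s^2 c_j^2 - \widetilde c^2 s_j^2$, and then bound the two factors of $X$ in the spectral norm. The only cosmetic difference is that the paper argues via $\|\vec r\|/\|\vec{\widetilde x}\| \ge \sigma_{\min}(X^{-*} D X^{-1}) \ge \sigma_{\min}^2(X^{-1}) \min_j |\widetilde s^2 c_j^2 - \widetilde c^2 s_j^2|$, which handles the singular-$D$ case uniformly and makes your (harmless) case split unnecessary.
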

\begin{proof}
  The result follows from
  \begin{equation*}
    \begin{split}
      \frac{\|\vec r\|}{\|\vec{\widetilde x}\|}
      &\ge \sigma_{\min} (\widetilde s^2 A^*\!A - \widetilde c^2 B^*B) \\
      &= \sigma_{\min} (X^{-*}(\widetilde s^2 \Sigma_A^T\Sigma_A
      {} - \widetilde c^2 \Sigma_B^T\Sigma_B)X^{-1})
      \ge \sigma_{\min}^2 (X^{-1}) \min_{j}|\widetilde s^2 c_j^2
      {} - \widetilde c^2 s_j^2|.
    \end{split}
  \end{equation*}
\end{proof}


\noindent
An additional interesting observation is that if $\widetilde c$ and
$\widetilde s$ are scaled such that $\widetilde c^2 + \widetilde s^2 =
c_\star^2 + s_\star^2 = 1$, and the generalized singular values are
given by $\widetilde \sigma = \widetilde c / \widetilde s$ and
$\sigma_\star = c_\star / s_\star$; then
\begin{equation*}
    |\widetilde s^2 c_\star^2 - \widetilde c^2 s_\star^2|
    = |\widetilde s^2 - s_\star^2|
    = |c_\star^2 - \widetilde c^2|
    = \left|\frac{\widetilde \sigma^2}{1+\widetilde \sigma^2}
    {} - \frac{\sigma_\star^2}{1+\sigma_\star^2}\right|,
\end{equation*}
with the conventions $\infty / \infty = 1$ and $\infty - \infty = 0$.


The bound in Proposition~\ref{tcgsvd:thm:bf} may be rather pessimistic,
and we expect asymptotic convergence of order $\|\vec r\|^2$ due to the
relation with the symmetric eigenvalue problem.
It turns out that the desired result is easily generalized using the
$M$-sine and the $M^{-1}$-norm. Specifically, let $\theta$ be defined as
in \eqref{tcgsvd:eq:theta} and define the residual norms
\begin{equation*}
  \rho(\vec z)
  = \|(L^{-1}NL^{-1} - \theta I) \vec z\|
  \quad\text{and}\quad
  \rho_M(\vec w)
  = \rho(L\vec w)
  = \|(N - \theta M) \vec w\|_{M^{-1}};
\end{equation*}
then we can immeadiately derive the following proposition.

\begin{proposition}[Generalization of, e.g.,
  {\autocite[Thm.~11.7.1, Cor.~11.7.1]{Par98}}]
  \label{tcgsvd:thm:parlett}
  Suppose $\lambda_1 - \theta_1 < \theta_1 - \lambda_2$; then
  \begin{equation*}
    \frac{\rho_M(\vec w_1)}{\lambda_1 - \lambda_n}
    \le \sin_M(\vec w_1,\vec x_1)
    \le \tan_M(\vec w_1,\vec x_1)
    \le \frac{\rho_M(\vec w_1)}{\theta_1 - \lambda_2}
  \end{equation*}
  and
  \begin{equation*}
    \frac{\rho_M^2(\vec w_1)}{\lambda_1 - \lambda_n}
    \le \lambda_1 - \theta_1
    \le \frac{\rho_M^2(\vec w)}{\theta_1 - \lambda_2}.
  \end{equation*}
\end{proposition}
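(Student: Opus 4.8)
The plan is to reduce the proposition to the standard Hermitian eigenvalue problem \eqref{tcgsvd:eq:ep} and then to invoke Parlett's residual bounds verbatim, in the same spirit as the earlier propositions of this section. Set $\vec z_1 = L\vec w_1$ and $\vec y_1 = L\vec x_1$, and take $Z = LW$ as a basis for $\subspace Z = L\subspace W$. The first observation is that the projected pencil $(Z^* L^{-1}NL^{-1} Z,\, Z^*Z)$ equals $(W^*NW,\, W^*MW)$, so the Rayleigh--Ritz problem for \eqref{tcgsvd:eq:ep} on $\subspace Z$ is literally identical to the Rayleigh--Ritz problem for \eqref{tcgsvd:eq:gep} on $\subspace W$; in particular $(\theta_1,\vec w_1)$ is a Ritz pair of the generalized problem exactly when $(\theta_1, \vec z_1)$ is a Ritz pair of $L^{-1}NL^{-1}$, with a common Ritz value by \eqref{tcgsvd:eq:theta}. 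Since both problems share the eigenvalues $\lambda_1 \ge \cdots \ge \lambda_n$, the hypothesis $\lambda_1 - \theta_1 < \theta_1 - \lambda_2$ is the same separation condition in either formulation.

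Next I would match the three quantities in the bounds to their standard analogues. By the definitions $\sin_M(\vec w_1,\vec x_1) = \sin(\vec z_1, \vec y_1)$ and $\tan_M(\vec w_1,\vec x_1) = \tan(\vec z_1,\vec y_1)$, while the residual norm was introduced precisely so that $\rho_M(\vec w_1) = \rho(\vec z_1)$ is the ordinary residual norm of the Ritz pair $(\theta_1,\vec z_1)$. With these identifications, Parlett's Theorem~11.7.1 and Corollary~11.7.1 applied to $L^{-1}NL^{-1}\vec y = \lambda\vec y$ with Ritz pair $(\theta_1,\vec z_1)$ and target eigenpair $(\lambda_1,\vec y_1)$ deliver both the linear chain of sine and tangent bounds and the quadratic bound on $\lambda_1 - \theta_1$; reading each term back through the identities above reproduces the two displayed inequalities exactly.

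In short, all of the estimation is inherited from the symmetric case, and the only genuine verification is the Rayleigh--Ritz correspondence together with the residual-norm identity recorded just before the statement. I expect the sole point requiring care to be bookkeeping: confirming that the map $\vec w \mapsto L\vec w$ preserves the Ritz value $\theta_1$ and the index of the approximated eigenpair, so that the separation hypothesis and the roles of $\lambda_1$, $\lambda_2$, and $\lambda_n$ carry over unchanged. Once that is checked, the proposition follows as a direct specialization of Parlett's theorems with no new analysis required.
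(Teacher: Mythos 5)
Your proposal is correct and follows essentially the same route as the paper: the paper develops exactly this machinery (the identities $\sin_M(\vec w,\vec x) = \sin(L\vec w, L\vec x)$, $\tan_M = \sin_M/\cos_M$, and $\rho_M(\vec w) = \rho(L\vec w)$ defined just before the statement) so that the proposition follows "immediately" by applying Parlett's results to the transformed problem $L^{-1}NL^{-1}\vec y = \lambda\vec y$. Your write-up merely makes explicit the bookkeeping the paper leaves implicit — the correspondence of Ritz pairs via $(Z^*L^{-1}NL^{-1}Z, Z^*Z) = (W^*NW, W^*MW)$, the preservation of eigenvalues, and hence of the separation hypothesis — which is precisely the intended argument.
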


\noindent
Having the $M^{-1}$-norm for the residual instead of the $M$-norm might
be surprising; however, the former is a natural choice in this context;
see, e.g., \autocite[Ch.~15]{Par98}. Moreover,
Proposition~\ref{tcgsvd:thm:parlett} combined with the norm equivalence
\begin{equation*}
  \sigma_{\max}^{-1}(M)\, \|\vec r\|^2
  \le \|\vec r\|_{M^{-1}}^2
  \le \sigma_{\min}^{-1}(M)\, \|\vec r\|^2
\end{equation*}
implies that the converence of the generalized singular values must be
of order $\|\vec r\|^2$. This result is verified in an example in the
next section.



\section{Numerical experiments}
\label{tcgsvd:sec:num}

In this section we compare our new algorithms to JDGSVD and Zha's
modified Lanczos algorithm by using tests similar to the examples found
in \autocite{MH09jdgsvd} and \textcite{HZ96gsvd}. Additionally, we will
apply Algorithm~\ref{tcgsvd:alg:gdgsvd} and
Algorithm~\ref{tcgsvd:alg:mdgsvd} to general form Tikhonov
regularization by approximating truncated GSVDs for several test
problems. The first set of examples is detailed below.


\begin{example}\label{tcgsvd:ex:diag}
  Let $A = CD$ and $B = SD$ be two $n\times n$ matrices, where
  \begin{equation*}
    \begin{split}
      C &= \diag(c_j), & c_j &= (n - j + 1) / (2n), & S &= \sqrt{I - C^2}, \\
      D &= \diag(d_j), & d_j &= \lceil j / (n/4) \rceil + r_j,
    \end{split}
  \end{equation*}
  with $r_j$ drawn from the standard uniform distribution on the open
  interval $(0,1)$.
\end{example}



\begin{example}\label{tcgsvd:ex:orth}
  Let $C$ and $S$ be the same as in Example~\ref{tcgsvd:ex:diag}. Furthermore,
  let $A = UC\widetilde DW^*$ and $B=VS\widetilde DW^*$, where $U$, $V$,
  and $W$ are random orthonormal matrices, and $\widetilde D =
  \diag(\widetilde d_j)$ with
  \begin{equation*}
    \widetilde d_j = d_j - \min_{1\le j\le n} d_j + 10^{-\kappa}.
  \end{equation*}
  Three values for $\kappa$ are considered, (a) $\kappa = 6$, (b)
  $\kappa = 9$, and (c) $\kappa = 12$.
\end{example}



\begin{example}\label{tcgsvd:ex:hh}
  Let $C$ and $S$ be the same as in Example~\ref{tcgsvd:ex:diag}, and
  let $\widetilde D$ be the same as in Example~\ref{tcgsvd:ex:orth}. Let
  $\vec f$, $\vec g$, and $\vec h$ be random vectors on the unit
  $(n-1)$-sphere, and set
  \begin{equation*}
    A = (I - 2\vec f\vec f^*) C\widetilde D (I - 2\vec h\vec h^*)
    \quad\text{and}\quad
    B = (I - 2\vec g\vec g^*) S\widetilde D (I - 2\vec h\vec h^*).
  \end{equation*}
  Note that $I - 2\vec f \vec f^*$, $I - 2\vec g \vec g^*$, and $I -
  2\vec h \vec h^*$ are Householder reflections.
\end{example}






\begin{example}\label{tcgsvd:ex:rand}
  Let
  \begin{equation*}
    A = \texttt{sprand(n, n, 1e-1, 1)}
    \quad\text{and}\quad
    B = \texttt{sprand(n, n, 1e-1, 1e-2)},
  \end{equation*}
  where \texttt{sprand} is the MATLAB function with the same name.
\end{example}



\begin{table}[!htbp]
  \footnotesize
  \centerfloat
  \caption{The median number of matrix-vector products the algorithms
    require for Examples~\ref{tcgsvd:ex:diag}--\ref{tcgsvd:ex:rand} to
    compute an approximation satisfying \eqref{tcgsvd:eq:numtol}. The
    tolerance $\tau = 10^{-3}$ was used for Zha's modified Lanczos
    algorithm, while $\tau = 10^{-6}$ was used for the remaining
    algorithms.  The symbol $-$ indicates a failure to converge up to
    the desired tolerance within the maximum number of iterations
    specified in the text, and the column \textbf{Cond} contains the
    condition numbers of $[A^T\; B^T]^T$.}
  \label{tcgsvd:tab:synth}
  \begin{tabular}{@{}lr|rr|rr|rr|rr@{}}
    \toprule
    \textbf{Alg} &
    & \multicolumn{2}{c|}{\textbf{Zha}}
    & \multicolumn{2}{c|}{\textbf{JDGSVD}}
    & \multicolumn{2}{c|}{\textbf{GDGSVD}}
    & \multicolumn{2}{c}{\textbf{MDGSVD}} \\
    \textbf{Ex} & \textbf{Cond}
    & $\bm{\sigma_{\max}}$ & $\bm{\sigma_{\min}}$
    & $\bm{\sigma_{\max}}$ & $\bm{\sigma_{\min}}$
    & $\bm{\sigma_{\max}}$ & $\bm{\sigma_{\min}}$
    & $\bm{\sigma_{\max}}$ & $\bm{\sigma_{\min}}$ \\
    \midrule
    \ref{tcgsvd:ex:diag}  & 4.97e+00 &  3390 &  $-$ & 1524 & 6188 &  580 & 3072 &  502 & 730 \\
    \ref{tcgsvd:ex:orth}a & 3.99e+06 & 19082 &  $-$ & 2008 & 5396 &  992 & 2326 & 1054 & 622 \\
    \ref{tcgsvd:ex:orth}b & 3.99e+09 & 19082 &  $-$ & 2008 & 5396 &  998 & 2312 & 1036 & 628 \\
    \ref{tcgsvd:ex:orth}c & 3.99e+12 & 19082 &  $-$ & 2008 & 5374 &  998 & 2312 & 1030 & 622 \\
    \ref{tcgsvd:ex:hh}a   & 3.99e+06 & 17810 &  $-$ & 1964 & 5418 &  996 & 2318 & 1048 & 616 \\
    \ref{tcgsvd:ex:hh}b   & 3.99e+09 & 17810 &  $-$ & 1964 & 5418 &  996 & 2288 & 1036 & 616 \\
    \ref{tcgsvd:ex:hh}c   & 3.99e+12 & 17810 &  $-$ & 1964 & 5418 &  996 & 2288 & 1048 & 628 \\
    \ref{tcgsvd:ex:rand}  & 1.41e+00 &   $-$ & 1262 &  $-$ &  $-$ & 2334 &  244 & 2314 & 240 \\
    \bottomrule
  \end{tabular}
\end{table}



We generate the matrices from
Examples~\ref{tcgsvd:ex:diag}--\ref{tcgsvd:ex:rand} for $n = 1000$,
allowing us to verify the results.  For
Algorithm~\ref{tcgsvd:alg:gdgsvd} and Algorithm~\ref{tcgsvd:alg:mdgsvd}
we set the minimum dimension to 10, the maximum dimension to 30, and the
maximum number of restarts to 100. For JDGSVD we use the same minimum
and maximum dimensions in combination with a maximum of 10 and 1000
inner and outer iterations, respectively. Furthermore, we let JDGSVD use
standard extraction to find the largest generalized singular value, and
refined extraction to find the smallest generalized singular value.  We
have implemented Zha's modified Lanczos algorithm with LSQR, and let
LSQR use the tolerance $10^{-12}$ and a maximum of $\lceil 10 \sqrt{n}
\rceil = 320$ iterations. The maximum number of outer-iterations for the
modified Lanczos algorithm is 100.


\begin{figure}[htbp!]
  \centerfloat
  \tikzexternaldisable
  \begin{tikzpicture}
    \begin{semilogyaxis}[
          height=0.33\textwidth
        , width=0.44\textwidth
        , xlabel={\#MVs}
        , ylabel={$|\widetilde s^2 c_{\max}^2 - \widetilde c^2 s_{\max}^2|$}
        , every axis legend/.append style = {
              anchor = north east
            , at = {(0.98, 0.98)}
          }
        , enlarge x limits = false
        ]

        \addlegendentry{\ref{tcgsvd:ex:diag}}
        \addplot+ table [x index=0,y index=1] {data.hist.ex1.max.txt};
        \addlegendentry{\ref{tcgsvd:ex:orth}b}
        \addplot+ table [x index=0,y index=1] {data.hist.ex2b.max.txt};
        \addlegendentry{\ref{tcgsvd:ex:hh}b}
        \addplot+ table [x index=0,y index=1] {data.hist.ex3b.max.txt};
        \addlegendentry{\ref{tcgsvd:ex:rand}}
        \addplot+ table [x index=0,y index=1] {data.hist.ex5.max.txt};
    \end{semilogyaxis}
  \end{tikzpicture}
  \begin{tikzpicture}
    \begin{semilogyaxis}[
          height=0.33\textwidth
        , width=0.44\textwidth
        , xlabel={\#MVs}
        , ylabel={$|\widetilde s^2 c_{\min}^2 - \widetilde c^2 s_{\min}^2|$}
        , every axis legend/.append style = {
              anchor = north east
            , at = {(0.98, 0.98)}
          }
        , enlarge x limits = false
        , yticklabel pos=right
        ]
        \addlegendentry{\ref{tcgsvd:ex:diag}}
        \addplot+ table [x index=0,y index=1] {data.hist.ex1.min.txt};
        \addlegendentry{\ref{tcgsvd:ex:orth}b}
        \addplot+ table [x index=0,y index=1] {data.hist.ex2b.min.txt};
        \addlegendentry{\ref{tcgsvd:ex:hh}b}
        \addplot+ table [x index=0,y index=1] {data.hist.ex3b.min.txt};
        \addlegendentry{\ref{tcgsvd:ex:rand}}
        \addplot+ table [x index=0,y index=1] {data.hist.ex5.min.txt};
    \end{semilogyaxis}
  \end{tikzpicture}
  \tikzexternalenable
  \caption{The convergence history of MDGSVD as the errors from
    \eqref{tcgsvd:eq:numtol} compared to the number of matrix-vector
    products,  with results for the largest (left) and smallest (right)
    generalized singular pairs.}
  \label{tcgsvd:fig:conv}
\end{figure}
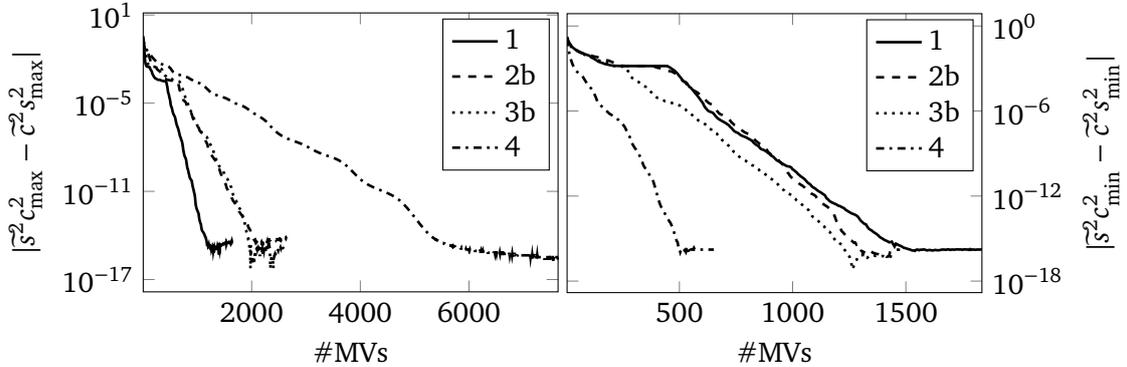


We run each test with 500 different starting vectors, and record the
number of matrix-vector products required until an approximate
generalized singular pair $(\widetilde c, \widetilde s)$ satisfies
\begin{equation}\label{tcgsvd:eq:numtol}
  |\widetilde s^2 c_{\max}^2 - \widetilde c^2 s_{\max}^2| < \tau
  \quad\text{or}\quad
  |\widetilde s^2 c_{\min}^2 - \widetilde c^2 s_{\min}^2| < \tau,
\end{equation}
where we use $\tau = 10^{-3}$ for Zha's modified Lanczos algorithm and
$\tau = 10^{-6}$ for the remaining algorithms. The median results are
shown in Table~\ref{tcgsvd:tab:synth}.  We notice that the convergence
of Zha's method is markedly slower here than in \cite{HZ96gsvd}.
Additional testing has indicated that the difference is caused by the
larger choice of $n$, which in turn decreases the gap between the
generalized singular pairs.  JDGSVD does not require accurate solutions
from the inner iterations and is significantly faster, but fails to
converge to a sufficiently accurate solution in the last example.
Compared to JDGSVD, GDGSVD approximately reduces the number of
matrix-vector multiplications by a factor of 2 for $\sigma_{\max}$ and
by a factor of $2$ to $2.4$ for $\sigma_{\min}$, and has no problem
finding a solution for the last example.  MDGSVD performs only slightly
worse than GDGSVD for the largest generalized singular pairs on
average, but uses approximately $4$ times fewer MVs than GDGSVD for the
smallest generalized singular pairs in almost all tests.


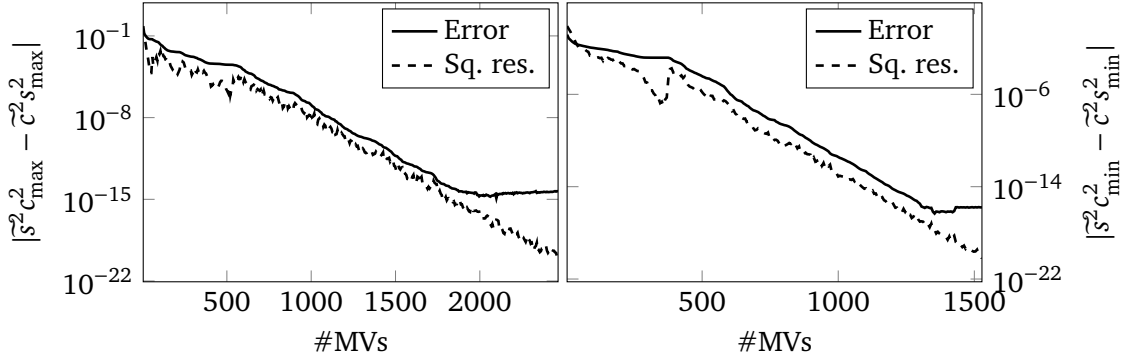
\begin{figure}[htbp!]
  \centerfloat
  \tikzexternaldisable
  \begin{tikzpicture}
    \begin{semilogyaxis}[
          height=0.33\textwidth
        , width=0.44\textwidth
        , xlabel={\#MVs}
        , ylabel={$|\widetilde s^2 c_{\max}^2 - \widetilde c^2 s_{\max}^2|$}
        , every axis legend/.append style = {
              anchor = north east
            , at = {(0.98, 0.98)}
          }
        , enlarge x limits = false
        ]

        \addlegendentry{Error}
        \addplot+ table [x index=0,y index=1] {data.res.ex2a.max.txt};
        \addlegendentry{Sq. res.}
        \addplot+ table [x index=0,y index=2] {data.res.ex2a.max.txt};
    \end{semilogyaxis}
  \end{tikzpicture}
  \begin{tikzpicture}
    \begin{semilogyaxis}[
          height=0.33\textwidth
        , width=0.44\textwidth
        , xlabel={\#MVs}
        , ylabel={$|\widetilde s^2 c_{\min}^2 - \widetilde c^2 s_{\min}^2|$}
        , every axis legend/.append style = {
              anchor = north east
            , at = {(0.98, 0.98)}
          }
        , enlarge x limits = false
        , yticklabel pos=right
        ]
        \addlegendentry{Error}
        \addplot+ table [x index=0,y index=1] {data.res.ex2a.min.txt};
        \addlegendentry{Sq. res.}
        \addplot+ table [x index=0,y index=2] {data.res.ex2a.min.txt};
    \end{semilogyaxis}
  \end{tikzpicture}
  \tikzexternalenable
  \caption{The errors of the largest (left) and smallest (right)
    generalized singular pairs approximations compared to the square of
    the relative residual norm in the right-hand side of
    \eqref{tcgsvd:eq:cnvtol}.  The results are for
    Example~\ref{tcgsvd:ex:orth}a and MDGSVD.}
  \label{tcgsvd:fig:error}
\end{figure}


Figure~\ref{tcgsvd:fig:conv} shows the convergence of MDGSVD.  The
monotone behavior and asymptotic linear convergence of the method are
clearly visible. We can also see that the asymptotic convergence is
significantly better than the worst-case bound from
Proposition~\ref{tcgsvd:thm:asymconv}. Figure~\ref{tcgsvd:fig:error}
shows a comparison between the relative residual norm
\eqref{tcgsvd:eq:cnvtol} and the convergence of the generalized singular
pairs for Example~\ref{tcgsvd:ex:orth}a. The results for the other
examples are similar, and are therefore omitted. Although the graphs
belonging to the smallest generalized singular pairs suggest temporary
misconvergence, the comparison still demonstrates that
\eqref{tcgsvd:eq:cnvtol} is an asymptotically suitable indicator for the
convergence of the generalized singular pairs. Moreover, the convergence
of the generalized singular pairs appears to be quadratic in the
residual norm.


\begin{example}\label{tcgsvd:ex:tgsvd}
  Given a large, sparse, and ill-conditioned matrix $A$, consider the
  problem of reconstructing exact data $\vec{x_\star}$ from measured
  data $\vec b = A\vec{x_\star} + \vec e$, where $\vec e$ is a noise
  vector. A regularized solution may be determined with general form
  Tikhonov regularization by computing
  \begin{equation*}
    \vec{x_\mu} = \argmin_{\vec x} \|A\vec x - \vec b\|^2 + \mu \|B\vec x\|^2
  \end{equation*}
  for some operator $B$ with $\Null(A) \cap \Null(B) = \{ \vec 0 \}$,
  and some parameter $\mu > 0$.  For the purpose of this example, we
  take several $n\times n$ matrices $A$ and length $n$ solution vectors
  $\vec{x_\star}$ from Regularization Tools \autocite{Regutools}, and
  for $B$ we use the $(n-1)\times n$ finite difference operator
  \begin{equation*}
    B = \begin{bmatrix}
      1 & -1 \\ & \ddots & \ddots \\ & & 1 & -1
    \end{bmatrix}.
  \end{equation*}
  The entries of the noise vectors $\vec e$ are independently drawn from
  the standard normal distribution, after which the vector $\vec e$ is
  scaled such that $\epsilon = \mathbb{E}[\|\vec e\|] = 0.01\|\vec b\|$.
  We select the parameters $\mu$ such that $\|A\vec{x_\mu} - \vec b\| =
  \eta\epsilon$, where $\eta = 1 + 3.090232 / \sqrt{2n}$ so that $\|\vec
  e\|\le\eta\epsilon$ with probability $0.999$.
\end{example}



\begin{table}[!htbp]
  \footnotesize
  \centerfloat
  \caption{Truncated GSVD tests where only the nullspace of $B$ is
    deflated, and the iterations are terminated when the relative
    residual for the second largest generalized singular pair after
    $(1,0)$ is sufficiently small. The columns \textbf{Rank} and
    \textbf{Eff. cond} contain the numerical rank and effective
    condition number of $A$; and $\bm{\sin(\vec x_2, \vec{\widetilde
    x}_2)}$ is a measure for the error in the approximation of the
    generalized singular vector corresponding to the second largest
    generalized singular pair.}
  \label{tcgsvd:tab:tgsvd}
  \begin{tabular}{@{}lrr|rrr|rrr@{}}
    \toprule
    \textbf{Alg}
    & &
    & \multicolumn{3}{c|}{\textbf{GDGSVD}}
    & \multicolumn{3}{c}{\textbf{MDGSVD}} \\
    \textbf{Ex} & \textbf{Rank} & \textbf{Eff. cond}
    & $\bm{\sin(\vec x_2, \vec{\widetilde x}_2)}$ & \textbf{Rel. Err.} & \textbf{\#MV}
    & $\bm{\sin(\vec x_2, \vec{\widetilde x}_2)}$ & \textbf{Rel. Err.} & \textbf{\#MV} \\
    \midrule
    \textsf{Baart}     &   13 & $5.30\txte+12$ & $1.25\txte-5$ & $2.25\txte-5$ & 1632 & $3.40\txte-6$ & $2.08\txte-5$ &   72 \\
    \textsf{Deriv2-1}  & 1024 & $1.27\txte+06$ & $1.51\txte-5$ & $9.50\txte-5$ & 4228 & $7.92\txte-6$ & $2.75\txte-5$ & 2074 \\
    \textsf{Deriv2-2}  & 1024 & $1.27\txte+06$ & $1.51\txte-5$ & $8.61\txte-5$ & 4228 & $7.92\txte-6$ & $9.90\txte-6$ & 2074 \\
    \textsf{Deriv2-3}  & 1024 & $1.27\txte+06$ & $1.51\txte-5$ & $1.91\txte-3$ & 4228 & $7.92\txte-6$ & $1.22\txte-4$ & 2074 \\
    \textsf{Foxgood}   &   30 & $3.88\txte+12$ & $2.90\txte-5$ & $1.15\txte-5$ & 4148 & $4.59\txte-5$ & $2.31\txte-5$ & 2830 \\
    \textsf{Gravity-1} &   45 & $5.80\txte+12$ & $1.56\txte-5$ & $2.52\txte-4$ & 3764 & $1.06\txte-5$ & $9.97\txte-4$ & 1750 \\
    \textsf{Gravity-2} &   45 & $5.80\txte+12$ & $1.56\txte-5$ & $5.88\txte-4$ & 3764 & $1.06\txte-5$ & $9.85\txte-4$ & 1750 \\
    \textsf{Gravity-3} &   45 & $5.80\txte+12$ & $1.56\txte-5$ & $3.04\txte-4$ & 3764 & $1.06\txte-5$ & $3.77\txte-4$ & 1750 \\
    \textsf{Heat-1}    &  587 & $6.18\txte+12$ & $3.52\txte-5$ & $4.20\txte-2$ & 4976 & $9.48\txte-6$ & $7.16\txte-2$ &  802 \\
    \textsf{Heat-5}    & 1022 & $1.27\txte+03$ & $1.18\txte-5$ & $5.73\txte-2$ & 5036 & $7.44\txte-6$ & $1.28\txte-1$ &  616 \\
    \textsf{Phillips}  & 1024 & $2.90\txte+10$ & $1.25\txte-5$ & $5.82\txte-3$ & 4188 & $7.87\txte-6$ & $1.58\txte-3$ & 1762 \\
    \textsf{Shaw}      &   20 & $4.32\txte+12$ & $2.98\txte-5$ & $2.24\txte-2$ & 3644 & $1.32\txte-5$ & $7.75\txte-3$ & 2308 \\
    \textsf{Wing}      &    8 & $1.01\txte+12$ & $1.61\txte-5$ & $1.39\txte-5$ & 4492 & $4.55\txte-5$ & $1.44\txte-4$ & 3064 \\
    \bottomrule
  \end{tabular}
\end{table}



\begin{table}[!htbp]
  \footnotesize
  \centerfloat
  \caption{Truncated GSVD tests and results similar to
    Table~\ref{tcgsvd:tab:tgsvd}, but in this case with the
    approximation of the five largest generalized singular pairs after
    the pair $(1,0)$ corresponding to the nullspace of the
    regularization operator.}
  \label{tcgsvd:tab:defl}
  \begin{tabular}{@{}l|rrr|rrr@{}}
    \toprule
    \textbf{Alg}
    & \multicolumn{3}{c|}{\textbf{GDGSVD}}
    & \multicolumn{3}{c}{\textbf{MDGSVD}} \\
    \textbf{Ex}
    & $\bm{\sin(\vec x_2, \vec{\widetilde x}_2)}$ & \textbf{Rel. Err.} & \textbf{\#MV}
    & $\bm{\sin(\vec x_2, \vec{\widetilde x}_2)}$ & \textbf{Rel. Err.} & \textbf{\#MV} \\
    \midrule
    \textsf{Baart}     & $1.82\txte-6$ & $3.19\txte-6$ & 1996 & $2.61\txte-8$ & $1.51\txte-7$ &   74 \\
    \textsf{Deriv2-1}  & $8.03\txte-6$ & $8.99\txte-6$ & 6088 & $6.54\txte-6$ & $1.08\txte-5$ & 3604 \\
    \textsf{Deriv2-2}  & $8.03\txte-6$ & $3.52\txte-6$ & 6088 & $6.54\txte-6$ & $4.03\txte-6$ & 3604 \\
    \textsf{Deriv2-3}  & $8.03\txte-6$ & $6.25\txte-5$ & 6088 & $6.54\txte-6$ & $4.25\txte-5$ & 3604 \\
    \textsf{Foxgood}   & $6.91\txte-6$ & $3.36\txte-6$ & 6808 & $1.07\txte-5$ & $5.20\txte-6$ & 5485 \\
    \textsf{Gravity-1} & $1.93\txte-6$ & $1.14\txte-5$ & 5600 & $4.85\txte-6$ & $4.10\txte-5$ & 4012 \\
    \textsf{Gravity-2} & $1.93\txte-6$ & $3.11\txte-5$ & 5600 & $4.85\txte-6$ & $3.50\txte-5$ & 4012 \\
    \textsf{Gravity-3} & $1.93\txte-6$ & $8.39\txte-6$ & 5600 & $4.85\txte-6$ & $1.86\txte-5$ & 4012 \\
    \textsf{Heat-1}    & $2.70\txte-6$ & $2.82\txte-2$ & 7520 & $5.14\txte-6$ & $4.74\txte-2$ & 1948 \\
    \textsf{Heat-5}    & $7.92\txte-6$ & $4.63\txte-2$ & 6676 & $2.92\txte-6$ & $2.48\txte-2$ & 1804 \\
    \textsf{Phillips}  & $4.74\txte-6$ & $3.49\txte-4$ & 5912 & $2.30\txte-6$ & $1.63\txte-4$ & 3574 \\
    \textsf{Shaw}      & $1.91\txte-6$ & $6.51\txte-5$ & 5772 & $2.67\txte-6$ & $1.80\txte-4$ & 5620 \\
    \textsf{Wing}      & $8.33\txte-6$ & $4.16\txte-6$ & 5292 & $1.44\txte-5$ & $7.26\txte-6$ & 4618 \\
    \bottomrule
  \end{tabular}
\end{table}


Consider Example~\ref{tcgsvd:ex:tgsvd}, where we can write $\vec{x_\mu}$
as
\begin{equation*}
  \vec{x_\mu}
  = X (\Sigma_A^*\Sigma_A + \mu \Sigma_B^*\Sigma_B)^{-1} \Sigma_A^* U^* \vec b
  = \sum_{i=1}^n \frac{c_i}{c_i^2 + \mu s_i^2} \vec x_i \vec u_i^* \vec b.
\end{equation*}
For large-scale problems with rapidly decaying $c_i$ and multiple
right-hand sides $\vec b$, it may attractive to approximate the
truncated GSVD and compute the above summation only for a few of the
largest generalized singular pairs and their corresponding generalized
singular vectors.
In particular, we use our GDGSVD and MDGSVD methods to approximate the
truncated GSVD consisting of the 15 largest generalized singular pairs
and vectors.  We use minimum and maximum dimensions 15 and 45,
respectively, and a maximum of 100 restarts. We deflate or terminate
when the right-hand side of \eqref{tcgsvd:eq:cnvtol} is less than
$10^{-6}$, and seed the search space with the nullspace of $B$ spanned
by the vector $(1,\dots,1)^T$.  We consider two different cases. In the
first case, we deflate the seeded vector and terminate as soon as the
relative residual for the second largest generalized singular pair is
sufficiently small.  In the second case we deflate the seeded vector
plus four additional vectors, and terminate when the relative residual
corresponding to the sixth largest generalized singular pair is less
than $10^{-6}$.  We use the approximated truncated GSVDs to compute
$\vec{x_\mu}$, and compare it with the solution obtained with the exact
truncated GSVD.

The experiments are repeated with 1000 different initial vectors and
noise vectors, and the median results are reported in
Table~\ref{tcgsvd:tab:tgsvd} and Table~\ref{tcgsvd:tab:defl}.  Test
problems \textsf{Deriv2-\{1,2,3\}} all use the same matrix $A$, but have
different right-hand sides and solutions; the same is true for
\textsf{Gravity-\{1,2,3\}}. Test problems \textsf{Heat-\{1,5\}} have the
same solutions, but different $A$ and $\vec b$. The tables show a
reduction in the required number of matrix-vector products for
multidirectional subspace expansion, with reduction factors
approximately between 1.25 to 2.15 or better in the majority of cases.
However, the reduced number of matrix-vector products may come at the
cost of an increased relative error in the reconstructed solution and an
increased angle between the exact and approximated generalized singular
vector $\vec x_2$, although not consistently.



\section{Conclusion}
\label{tcgsvd:sec:con}

We have discussed two iterative methods for the computation of a few
extremal generalized singular values and vectors. The first method can
be seen as a generalized Davidson-type method, and the second as a
further generalization. Specifically, the second method uses
multidirectional subspace expansion combined with a truncation phase to
find improved search directions, while ensuring moderate subspace
growth. Both methods allow for a natural and straightforward thick
restart. We have also derived two different methods for the deflation of
generalized singular values and vectors.

We have characterized the locally optimal search directions and
expansion vectors in both the generalized Davidson method and the
multidirectional method. Note that these search directions generally
cannot be computed during the iterations. The inability to compute these
optimal search directions motivates multidirectional subspace expansion
and its reliance on the extraction process, as well as the removal of
low-quality search directions.  We have argued that our methods can
still achieve (asymptotic) linear convergence and have provided
asymptotic bounds on the rate of convergence. Additionally, we have
shown that the convergence of both methods is monotonic, and have
concluded the theoretical analysis by developing Rayleigh--Ritz theory
and generalizing known results for the Hermitian eigenvalue problem to
the Hermitian positive definite generalized eigenvalue problem that
corresponds to the GSVD.

The theoretical convergence behavior is supported by our numerical
experiments. Moreover, the numerical experiments demonstrate that our
generalized Davidson-type method is competitive with existing methods,
and suitable for approximating the truncated GSVD of matrix pairs with
rapidly decaying generalized singular values. Significant additional
performance improvements may be obtained by our new multidirectional
method.



%


\printbibliography

\end{document}